\newtheorem{defi}{\bf Definition}[section]
\newtheorem{theo}[defi]{\bf Theorem}
\newtheorem{lem}[defi]{\bf Lemma}
\title{On the Lie structure of a prime associative superalgebra}
\author{Jes\'us Laliena  }
\date{\quad}
\begin{document}
\maketitle\vspace{-1.5cm}

\begin{abstract}
In this paper some results on the Lie structure of prime superalgebras are discussed. We prove that, with the exception of some special cases,  for a prime superalgebra, $A$, over a ring of scalars $\Phi$ with $1/2\in \Phi$,   if $L$ is a Lie ideal of $A$ and $W$ is a subalgebra of $A$ such that $[W, L]\subseteq W$, then either $L\subseteq Z$ or $W\subseteq Z$. Likewise,  if $V$ is a submodule of $A$ and $[V, L]\subseteq V$, then either $V\subseteq Z$ or $L\subseteq Z$ or there exists an ideal of $A$, $M$, such that $0\not= [M,A]\subseteq V$. This work extends to prime superalgebras some results of I. N. Herstein, C. Lanski and S. Montgomery on prime algebras. 
\end{abstract}

\bigskip

{\parindent= 4em \small  \sl Keywords: associative superalgebras, prime superalgebras, Lie structure}

{\parindent=4em \small \bf Classification MSC 2010: 16W55, 17A70, 17C70}

\bigskip

\section{Introduction.}

\bigskip

An associative superalgebra is just a superalgebra that is associative like an ordinary algebra. If  $A=A_0+ A_1$ is a superalgebra, the elements in $A_0 \cup A_1$ are called homogeneous elements.

\medskip

 It is known that, if we take an associative superalgebra, $A$, and we change the product in $A$ by the superbracket product $[a, b]= ab- (-1)^{\bar a \bar b} ba$, where $\bar a, \bar b$ denotes the degree of $a$ and $b$, homogeneous elements in $A=A_0+A_1$, we obtain a Lie superalgebra, denoted by $A^-$. 
 
 \medskip
 
 The Lie structure of prime associative superalgebras and simple associative superalgebras was investigated by F. Montaner (\cite {M}) and S. Montgomery (\cite {Mo}). On
 superalgebras with superinvolution, several papers have also appeared studying the Lie structure of the skewsymmetric elements in relation to the ideals of the superalgebra (see \cite {Go-S}, \cite {Go-L-S}, \cite {L-S}, \cite {L}, \cite {L-Sa}).
 
 \medskip

In the non-graded case, there is a parallel situation for associative algebras with and without involution and Lie algebras. This fact was first studied by I. N. Herstein (\cite {H1}, \cite {H2}) and W. E. Baxter (\cite {B}), and after by several authors: T. E. Erickson (\cite {Er}), C. Lanski (\cite {La}, W. S. Martindale III and C. R. Miers (\cite {Ma}), \dots

\medskip
 
 The aim of this paper is to prove, in the setting of  prime associative superalgebras over a ring of scalars $\Phi$ with $1/2 \in \Phi$, the following results,  which are well known in the non-graded case. These results were proved by I. N. Herstein for semiprime, 2 torsion free rings (see Theorems 3 and 5, and Lemma 4 in \cite {H3}), and by C. Lanski and S. Montgomery for prime rings without restriction in the characteristic (see Theorems 12 and 13, and Lemma 11 in \cite {La-M}).
 
  \medskip
 
 \begin{lem} 
 Let $R$ be a prime, 2-torsion-free ring and $U$ a Lie ideal of $R$. Suppose that $A$ is an additive subgroup such that $[U, A]\subseteq A$ and $[A, A]\subseteq Z$. Then $A\subseteq Z$.
 \end{lem}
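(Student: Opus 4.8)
If $R$ is commutative then $Z=R$ and there is nothing to prove, so assume $R$ is noncommutative; we may also assume $U\not\subseteq Z$, since otherwise $[U,A]\subseteq A$ imposes no restriction. We shall use freely the following classical facts about a \emph{non-central} Lie ideal $V$ of a prime ring $R$ with $1/2\in R$: the centralizer $C_R(V)$ equals $Z$; the additive subgroup $[V,V]$ is again a non-central Lie ideal of $R$; and $V$ contains $[M,M]$ for some nonzero ideal $M$ of $R$ (which, $R$ being prime and noncommutative, is itself a non-central Lie ideal).

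Fix $a\in A$ and let $d=\mathrm{ad}(a)$ be the inner derivation $x\mapsto[x,a]$ of $R$. Since $a\in A$, the hypotheses give at once $d(U)\subseteq[U,A]\subseteq A$ and $d(A)\subseteq[A,A]\subseteq Z$, so that $d^{2}(U)\subseteq d(A)\subseteq Z$ and, because $[z,a]=0$ for $z\in Z$, also $d^{3}(U)\subseteq d(Z)=0$. Thus $[[u,a],a]\in Z$ and $[[[u,a],a],a]=0$ for every $u\in U$, and moreover $d(U)$ is an additive subgroup of $A$, so $[d(U),d(U)]\subseteq[A,A]\subseteq Z$.

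Suppose first that $[U,a]\subseteq Z$ has been shown for every $a\in A$, that is, $[U,A]\subseteq Z$. Then, for $u,v\in U$ and $a\in A$, the Jacobi identity yields
\[
[[u,v],a]=-[[v,a],u]-[[a,u],v]=0,
\]
because $[v,a]$ and $[a,u]$ both lie in $[U,A]\subseteq Z$. Hence every element of $A$ centralizes the non-central Lie ideal $[U,U]$, so $A\subseteq C_R([U,U])=Z$, which is the conclusion. Therefore everything reduces to proving, for the fixed $a\in A$, that $d(U)=[U,a]\subseteq Z$.

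For this one uses that $U$ is a Lie ideal. If $u\in U$ and $r\in R$ then $[u,r]\in U$, so $d^{2}([u,r])\in Z$ and $d^{3}([u,r])=0$; expanding by the Leibniz rule and using $d^{2}(u)\in Z$, $d^{3}(u)=0$, one obtains
\[
2\,[d(u),d(r)]+[u,d^{2}(r)]\in Z ,
\]
and, applying $d$ once more, $3\,[d(u),d^{2}(r)]+[u,d^{3}(r)]=0$, for all $u\in U$, $r\in R$. Substituting into these relations the elements $[m_{1},m_{2}]\in U$ arising from a nonzero ideal $M$ with $[M,M]\subseteq U$, letting $r$ run over $M$ and over products of elements of $M$, and then invoking primeness together with the equalities $C_R(W)=Z$ for the non-central Lie ideals $W$ that appear, one forces $[U,a]\subseteq Z$; the hypothesis that $R$ is $2$-torsion-free is what allows the cancellation of the coefficients occurring in these manipulations. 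The main obstacle is exactly this last step --- transferring the ``second order'' information $d^{2}(U)\subseteq Z$, which a priori lives only on the Lie ideal $U$, to the global statement $d(U)\subseteq Z$ --- and it is here that primeness, $2$-torsion-freeness and the structure theory of non-central Lie ideals of prime rings are genuinely needed; the earlier reductions and the derivation bookkeeping are routine by comparison.
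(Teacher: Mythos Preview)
The paper does not actually prove this lemma: it is quoted in the Introduction as a classical result of Herstein (Lemma~4 in \cite{H3}) that the paper then generalizes to superalgebras. The paper's own proof of the analogous statement is that of Lemma~2.5, and that is what your sketch should be compared with.

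Your reductions are correct: after assuming $U\not\subseteq Z$ (which is needed---as stated the lemma is false when $U\subseteq Z$), it suffices to show $[U,a]\subseteq Z$ for each $a\in A$, and the Leibniz identities $2[d(u),d(r)]+[u,d^{2}(r)]\in Z$ and $3[d(u),d^{2}(r)]+[u,d^{3}(r)]=0$ are right. But the last paragraph is a genuine gap, and the route you outline for it does not obviously close it. Those identities involve $d^{2}(r)$ and $d^{3}(r)$ for arbitrary $r\in R$, over which you have no control; ``letting $r$ run over $M$ and products of elements of $M$'' does not visibly eliminate them, and no mechanism is given for doing so. What actually resolves the obstacle---and is exactly the device used in the paper's proof of Lemma~2.5---is to intersect with an ideal so as to convert ``$\subseteq Z$'' into ``$=0$''. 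Namely, pick a nonzero ideal $I$ with $[I,R]\subseteq U$; then $[a,[a,[I,R]]]\subseteq[A,A]\cap I\subseteq Z\cap I$. If $Z\cap I=0$ this gives $[a,[a,[I,R]]]=0$, and Herstein's sublemma (Lemma~1.4 here: $[a,[a,L]]=0\Rightarrow[a,L]=0$ in a semiprime ring) yields $[a,[I,R]]=0$, hence $a\in C_R([I,R])=Z$. If $Z\cap I\ne0$ one localizes at $Z$ so that $I$ becomes the whole ring, $A$ becomes invariant under $[R,R]$, and the standard structure theory you already invoked finishes. In short, the missing idea is the intersection-with-$I$ trick that feeds the sublemma directly; your derivation calculus over all of $R$ is not a substitute for it.
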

 
 \medskip
 
 \begin{theo} 
 Let $R$ be a prime, 2-torsion-free ring and $W$ a subring of $R$. Suppose that $U$ is a Lie ideal of $R$ such that $[W,U]\subseteq W$. Then either $U\subseteq Z$, or $W\subseteq Z$, or $W$  contains a nonzero ideal of $R$.
 \end{theo}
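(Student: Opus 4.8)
The plan is to reduce to the case where neither $U$ nor $W$ is central, to locate a noncentral Lie ideal of $R$ sitting inside $W$, and then to use that $W$ is a \emph{subring} in order to promote that Lie ideal to a genuine nonzero ideal of $R$ contained in $W$.

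First I would dispose of the trivial alternatives: if $U\subseteq Z$ or $W\subseteq Z$ there is nothing to prove, so assume $U\not\subseteq Z$ and $W\not\subseteq Z$; this already forces $R$ to be noncommutative, since otherwise $Z=R\supseteq U$. Now $W$ is, in particular, an additive subgroup with $[W,U]\subseteq W$, so I would apply the companion ``additive subgroup'' theorem for prime $2$-torsion-free rings (the analogue of Theorem~12 of \cite{La-M}, whose proof uses Lemma~1 above): since $W$ and $U$ are both noncentral, it produces a nonzero ideal $M$ of $R$ with $0\neq[M,R]\subseteq W$. Here $[M,R]$ is a Lie ideal of $R$, and it is noncentral --- if $[M,R]\subseteq Z$, a short computation with the inner derivation $x\mapsto[m,x]$, using $1/2\in\Phi$ and primeness, gives $M\subseteq Z$ and hence $R$ commutative, contradicting the previous sentence.

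The core step is then to pass from ``$W$ contains the noncentral Lie ideal $[M,R]$'' to ``$W$ contains a nonzero ideal of $R$,'' and this is exactly where the hypothesis that $W$ is closed under multiplication is used. Since $[M,R]\subseteq W$ we also have $[M,W]\subseteq[M,R]\subseteq W$; expanding $[m_1,[m_2,r]\,r'']$ and $[\,r''[m_2,r],m_1]$ by means of $[ab,c]=a[b,c]+[a,c]b$ and using that $W$ is a subring, one obtains $RS\subseteq W$ and $SR\subseteq W$, where $S:=[M,[M,R]]$. Hence $RS^2R=(RS)(SR)\subseteq W$, and $RS^2R$ is a two-sided ideal of $R$; it is nonzero, because $S\neq0$ (the centralizer in $R$ of the noncentral Lie ideal $[M,R]$ is $Z$) while, if $S^2$ were $0$, the inclusion $[M,S]\subseteq S$ would give $SMS=0$ and then $S=0$ by primeness. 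Thus $RS^2R$ is the desired nonzero ideal of $R$ inside $W$. (Alternatively one may simply quote Herstein's theorem that the subring generated by a noncentral Lie ideal of a prime ring with $1/2$ contains a nonzero ideal of the ring.)

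The step I expect to be the real obstacle is the first classical input, namely obtaining $[M,R]\subseteq W$ from the additive-subgroup version. That is the long part: roughly, if $W$ contained no such $[M,R]$ one shows in succession that $[W,W]$, then $[[W,W],U]$, then all further iterated brackets of $W$ against $U$ are central, and Lemma~1 applied with $A=W$ and the Lie ideal $U$ then forces $W\subseteq Z$. If one prefers not to quote it, the same conclusion can be reached by hand with $V=[W,M]$: Jacobi computations give $V\subseteq M\cap U$, $[U,V]\subseteq V$ and $[W,V]\subseteq V$; one rules out $V\subseteq Z$ by a derivation/primeness argument (it forces $[w,M]=0$, i.e.\ $w\in Z$, for every $w\in W$), one rules out $[[W,M],W]$ being $0$ or central by another primeness argument together with the fact that a group is never a union of two proper subgroups, and the surviving noncentral additive subgroup inside $W\cap M$ is what feeds back into the companion theorem. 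The only genuinely delicate point throughout is the bookkeeping of which of the intersections $W\cap M$, $M\cap U$, $W\cap M\cap U$ a given iterated commutator lies in.
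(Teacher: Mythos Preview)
The paper does not give its own proof of Theorem~1.2: it is quoted from Herstein \cite{H3} (Theorem~3) and Lanski--Montgomery \cite{La-M} (Theorem~12) as a known classical result motivating the superalgebra analogues. So there is no proof in the paper to set your proposal against directly; the closest comparison is the paper's proof of Theorem~2.6, the superalgebra version.

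Your argument has a circularity problem. You reduce Theorem~1.2 to the additive-subgroup theorem (Theorem~1.3), but in both \cite{H3} and \cite{La-M}, and in this paper's Theorems~2.6--2.7, the dependency runs the other way: the subring theorem is established first by a direct computation, and the additive-subgroup theorem is then deduced from it by passing to the subring generated by $[K,K]$ where $K=[V,U]$. Unless you supply an independent proof of Theorem~1.3, quoting it here begs the question. Your sketch in the final paragraph gestures at such an independent proof but is not complete enough to stand on its own. By contrast, the paper's direct route (in the proof of Theorem~2.6, which specializes to the ungraded case) sets $V=[W,L]$, shows for $u\in V$ that $[t,u][u,s]A\subseteq W$ and then $\bar L\,[t,u][u,s]A\subseteq W$, producing a nonzero ideal inside $W$ unless $[t,u][u,s]=0$ for all $t,s\in W$; that degenerate case is then chased down through $[V,V]$ to force $W\subseteq Z$. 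Your endgame computation showing $SR\subseteq W$ and $RS\subseteq W$ with $S=[M,[M,R]]$, and hence $RS^2R\subseteq W$, is correct and tidy---it is a perfectly good way to finish \emph{once} one has $[M,R]\subseteq W$---but obtaining that inclusion is exactly the content of the theorem one is trying to prove.
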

 
 \medskip
 
 \begin{theo}
 Let $R$ be a prime, 2-torsion-free ring and let $U$ be a Lie ideal of $R$. Suppose that $V$ is an additive subgroup of $R$ such that $[V, U]\subseteq V$. Then either $U\subseteq Z$, or $V\subseteq Z$, or there exists an ideal $M$ of $R$ such that $0\not= [M, R]\subseteq V$.
 \end{theo}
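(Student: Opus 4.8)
The plan is to run the classical Herstein--Lanski--Montgomery scheme: dispose of the degenerate cases with Lemma~1.3, pass from $V$ to the subring it generates and invoke Theorem~1.4 to trap a nonzero ideal, and then grind a commutator computation to push that ideal inside $V$. So assume the first two alternatives fail, i.e. $U\not\subseteq Z$ and $V\not\subseteq Z$, and look for a nonzero ideal $M$ with $0\ne[M,R]\subseteq V$. If $R$ were commutative then $U\subseteq Z=R$, a contradiction; hence $R$ is noncommutative and no nonzero ideal of $R$ is central, so any nonzero ideal $M$ we eventually produce will automatically satisfy $[M,R]\ne 0$. Moreover, if $[V,V]\subseteq Z$ then, since $[U,V]=[V,U]\subseteq V$, Lemma~1.3 (applied to the additive subgroup $V$ and the Lie ideal $U$) gives $V\subseteq Z$, again a contradiction; so I may also assume $[V,V]\not\subseteq Z$.

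Let $W$ be the subring of $R$ generated by $V$. From $[ab,c]=a[b,c]+[a,c]b$ and induction on the length of monomials in the elements of $V$, one gets $[W,u]\subseteq W$ for each $u\in U$, hence $[W,U]\subseteq W$. Since $V\subseteq W$ we have $W\not\subseteq Z$, so Theorem~1.4, applied to the subring $W$ and the Lie ideal $U$, forces $W$ to contain a nonzero ideal $M_0$ of $R$. On the other hand $U\not\subseteq Z$, so the standard structure theory of Lie ideals of prime rings of characteristic $\ne 2$ (Herstein's work cited above) supplies a nonzero ideal $I$ of $R$ with $[I,R]\subseteq U$. Replacing $M_0$ by the nonzero ideal $M_0IM_0$ (nonzero since $R$ is prime), I may assume $M_0\subseteq I\cap W$; then $[M_0,R]\subseteq[I,R]\subseteq U$, and therefore $[V,[M_0,R]]\subseteq[V,U]\subseteq V$.

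The remaining task --- and the main obstacle --- is to convert the two facts ``$M_0\subseteq W$'' and ``$[V,[M_0,R]]\subseteq V$'' into an inclusion $[M,R]\subseteq V$ for a suitable nonzero ideal $M$ manufactured from $M_0$ (for instance a power of $M_0$, or the ideal generated by suitable commutators of its elements). The mechanism is a bracket expansion: for $m,m'\in M_0$ and $r\in R$, write $m$ as a sum of products of elements of $V$ and expand $[m,[m',r]]$ (and its higher iterates) by repeated use of the Leibniz rule together with the Jacobi identity in the form $[[v,x],r]=[v,[x,r]]-[x,[v,r]]$; each term produced can be steered either into $V$ --- because it ultimately has the shape $[v,u]$ with $v\in V$ and $u\in[M_0,R]\subseteq U$ --- or into $Z$, where it is absorbed. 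The hypothesis $1/2\in\Phi$ enters precisely to separate symmetric parts in these Jacobi manipulations, and primeness is used at the close to cancel the unwanted outer factors; the delicacy is that $W$ (and $R$) need not contain an identity, so the bookkeeping of commutators of $V$-monomials must be carried out by hand, and the reductions of the first paragraph are exactly what rule out the degenerate outcomes of that bookkeeping. Once $[M,R]\subseteq V$ is secured for a nonzero ideal $M$, the first paragraph gives $[M,R]\ne 0$, which finishes the proof.
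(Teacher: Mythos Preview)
Your argument has a genuine gap, and it is precisely the one you flag yourself as ``the main obstacle.'' Having obtained a nonzero ideal $M_0$ inside the subring $W$ generated by $V$, you still need $[M,R]\subseteq V$ for some nonzero ideal $M$. An ideal sitting inside $W$ gives no such inclusion: elements of $W$ are sums of products of elements of $V$, and there is no reason the bracket of such a product with an arbitrary $r\in R$ should land back in $V$. Your proposed ``bracket expansion'' does not close up: when you expand $[v_1\cdots v_k,\,[m',r]]$ by Leibniz you get terms of the form $v_1\cdots v_{i-1}\,[v_i,[m',r]]\,v_{i+1}\cdots v_k$, and while the inner bracket lies in $V$, the surrounding $V$-factors do not disappear, nor do they produce elements of $U$ to bracket against. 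The sketch you give (``each term can be steered either into $V$ \dots or into $Z$'') is not an argument, and I do not see how to make it one without an additional idea.

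The paper's route (carried out in the superalgebra analogue, Theorem~2.7, whose ungraded specialization is exactly the statement here) sidesteps this difficulty by choosing a \emph{different} subring. Set $K=[V,U]$ and let $T=\{x\in R:[x,R]\subseteq V\}$; a one-line Leibniz check shows $T$ is a subring. The key computation is
\[
[[K,K],R]\ \subseteq\ [[K,R],K]\ \subseteq\ [U,V]\ \subseteq\ V,
\]
so $[K,K]\subseteq T$, and hence the subring $T'$ generated by $[K,K]$ lies in $T$. One verifies $[T',U]\subseteq T'$, applies Theorem~1.2 to $T'$, and the trichotomy finishes the proof immediately: if $T'\subseteq Z$ then $[K,K]\subseteq Z$, whence $K\subseteq Z$ by Lemma~1.1 and then $V\subseteq Z$; if $T'$ contains a nonzero ideal $M$, then $M\subseteq T$ gives $[M,R]\subseteq V$ with no further work. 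The point you are missing is to build the subring out of $[K,K]$ rather than out of $V$, precisely so that containment of an ideal already forces $[M,R]\subseteq V$. (Incidentally, your references to ``Lemma~1.3'' and ``Theorem~1.4'' appear to mean Lemma~1.1 and Theorem~1.2 in the paper's numbering.)
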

 
 \medskip
 
 These results have been very useful in rings  (see for example   \cite {B-C-F-V}, \cite {B-K-S}, \cite {K-B-M-R}, \dots ), and have also been used  in superalgebras, for example, in the study of the Lie ideals of the set of skewsymmetric elements of an associative superalgebra with superinvolution (see  \cite {Go-L-S}, \cite{L-S}, \cite {L}). As these results have never been proved in superalgebras, we are interested in proving them here. To do that we take advantage of some of the ideas developed in the proofs made in \cite {L}, \cite{H3} and \cite{La-M}. 
 
 \medskip
  
 For a complete introduction to the basic definitions and examples of superalgebras,
superinvolutions and prime and semiprime superalgebras, we refer the reader to \cite {E-L-S},  \cite {Go-S} and \cite{M}.

\medskip

Throughout the paper, unless otherwise stated,  $A$ will denote a nontrivial prime associative
superalgebra  over a commutative unital ring $\phi$ of scalars with ${1\over
2}\in \phi$. By a nontrivial superalgebra we understand a superalgebra with a nonzero odd part. $Z$ will
denote the even part of the center of $A$.

\medskip

If $Z\not= 0$, one can consider the localization $Z^{-1}A=\{z^{-1}a : 0\not= z \in Z, a\in A\}$. If
$A$ is prime, then  $Z^{-1}A$ is a central prime associative superalgebra over the field
$Z^{-1}Z$.  We call this superalgebra  the central closure of $A$. We also say that $A$ is a
central order in $Z^{-1}A$. This terminology is not the standard one, for which the definition involves
 the extended centroid. We say that $A$ is a central order in $C(n)$ if $Z\not= 0$ and $Z^{-1} A$ is isomorphic to the Clifford superalgebra of a non-degenerate quadratic space of dimension $n$ over $Z^{-1}Z$ (see Example 1.5 in \cite {Go-S}).
 
 \medskip

 More precisely, in this paper we mainly prove  three results. Let $A$  be a prime associative superalgebra over a ring of scalars $\Phi$ with $1/2 \in \Phi$, such that $A$ is not a central  order in $C(n)$ $n=1,2,3$,  and let $L$ be  a Lie ideal of $A$ then:
 
 \begin{enumerate}
\item[{\rm (1)}] If $V$is a $\Phi$-submodule of $A$ such that $[V, L]\subseteq V$ and $[V, V]\subseteq Z$, then either $L\subseteq Z$ or $V\subseteq Z$.

\item[{\rm (2)}] If $W$ is a subalgebra of $A$ such that $[W, L]\subseteq W$, then either $L\subseteq Z$, or  $W\subseteq Z$, or $W$ contains a  nonzero ideal of $A$.

\item[{\rm (3)}] If $V$ is a $\Phi$-submodule of $A$ such that $[V, L]\subseteq V$, then either $L\subseteq Z$, or $V\subseteq Z$ or there exists  an ideal $M$ of $A$ such that $0\not= [M, A] \subseteq V$.
\end{enumerate}
 
 In the context of superalgebras when  we say subalgebra, submodule or ideal, we mean graded subalgebra, submodule or ideal, respectively.
 
 \medskip

The following results are instrumental for the paper:

\medskip

\begin{lem} (\cite {H2}, Lemma 1.1.9)
Let $A$ be  a semiprime algebra  and  $L$  a Lie ideal of $A$. If  $[a,[a,L]]=0$, then $[a,L]=0$.
\end{lem}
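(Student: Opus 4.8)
The plan is to reconstruct the proof of this classical lemma. First reduce to the case where $A$ is prime: a semiprime algebra is a subdirect product of prime algebras, the hypotheses ``$L$ a Lie ideal'' and ``$[a,[a,L]]=0$'' descend to each prime quotient, and $[a,L]=0$ in $A$ as soon as it holds modulo every prime ideal. So assume $A$ prime (and recall $2$ is invertible in the scalars). Write $d=\operatorname{ad}_{a}$, $d(r)=[a,r]$: this is a derivation of the associative product, it maps $L$ into $L$ because $L$ is a Lie ideal, and the hypothesis says exactly that $d^{2}$ kills $L$. We want $d(L)=0$. One may assume $d^{2}(A)\neq 0$: if $d^{2}$ vanished on all of $A$, then $2\,[a,r][a,s]=d^{2}(rs)=0$ for all $r,s$, hence $[a,r][a,s]=0$, and then $[a,r]\,t\,[a,s]=[a,r][a,ts]-[a,r][a,t]\,s=0$ for all $r,s,t$; thus $[a,A]\,A\,[a,A]=0$ and primeness forces $[a,A]=0$, in particular $[a,L]=0$.

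Now push the Leibniz rule through the hypothesis. For $x,y\in L$ we have $[x,y]\in L$, so $0=d^{2}([x,y])=[d^{2}x,y]+2[dx,dy]+[x,d^{2}y]=2[dx,dy]$ (using $d^{2}x=d^{2}y=0$), whence $[dx,dy]=0$: the set $d(L)$ is a commutative subset of $L$ lying inside $\ker d$, since $d(dx)=d^{2}x=0$. The same computation applied to $[x,r]\in L$ for $x\in L$, $r\in A$ gives the identity $2[dx,dr]+[x,d^{2}r]=0$; taking for $x$ an element of $d(L)$, which $d$ annihilates, shows that every $v\in d(L)$ commutes with $d^{2}(A)$. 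Finally, a sharper use of this same identity — replace $r$ by $rb$ and by $br$, where $b=[a,x]\in L$, pulling $b$ through the brackets via $[a,b]=0$ and $[[a,x],b]=[b,b]=0$ — produces the annihilation relations $[x,[a,x]]\,d^{2}(A)=0=d^{2}(A)\,[x,[a,x]]$, and it is through relations of this sort that primeness will enter.

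What remains is to squeeze $d(L)=0$ out of these commutation and annihilation identities, and this is the only genuinely delicate step — the obstacle of the proof. The mechanism is the one already used in the degenerate case $d^{2}(A)=0$: expand $d^{2}$ on products and combine the annihilation relations so as to promote them to the annihilation of a nonzero two-sided ideal of $A$; since $A$ is prime and $d^{2}(A)\neq 0$, the elements in question must vanish, and running the argument over all of $d(L)$ yields $d(L)=0$, that is, $[a,L]=0$. Everything preceding this last step is routine Leibniz bookkeeping; the combinatorial manipulation that closes it is carried out in \cite{H2}, Lemma~1.1.9, to which one may of course appeal directly.
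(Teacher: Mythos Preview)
The paper gives no proof of this lemma: it is simply quoted from Herstein \cite{H2} as an instrumental result and used as a black box, so there is no argument in the paper to compare yours against.

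As a standalone proof, your sketch is correct in its preliminaries --- the reduction to the prime case is legitimate, the degenerate subcase $d^{2}(A)=0$ is handled cleanly, and the Leibniz identities you extract ($[d(L),d(L)]=0$, $[d(L),d^{2}(A)]=0$, and $d^{2}(A)\,[x,dx]=[x,dx]\,d^{2}(A)=0$ for $x\in L$) are all valid --- but it is not a proof, because you explicitly do not finish. You label the closing step ``the only genuinely delicate step'' and then defer it to \cite{H2}, Lemma~1.1.9. That is a genuine gap: everything you have written out is routine bookkeeping, and the one substantive step --- promoting these annihilation relations to $d(L)=0$ via primeness --- is precisely the one you hand back to Herstein. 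If the goal is an independent proof, that endgame must be carried out in full; if the goal is merely to justify the lemma for use downstream, then the bare citation (which is all the paper provides) already suffices and the preamble is unnecessary.
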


\medskip

\begin{lem} (\cite {M}, Lemmata 1.2, 1.3)
If $A=A_0 \oplus A_1 $ is a semiprime superalgebra, then  $A_0$ and $A$ are semiprime algebras. Moreover, if $A$ is prime, then 
either $A$ is prime or $A_0$ is prime (as algebras).
\end{lem}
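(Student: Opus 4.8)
The proof hinges on the grading automorphism $\sigma\colon A\to A$ defined by $\sigma(a_0+a_1)=a_0-a_1$ for $a_i\in A_i$; a check on homogeneous elements shows $\sigma$ is a $\Phi$-algebra automorphism with $\sigma^2=\mathrm{id}$. Since $1/2\in\Phi$, a $\Phi$-submodule $X$ of $A$ (in particular any one- or two-sided ideal) is graded if and only if $\sigma(X)=X$, because $x_0=\tfrac12(x+\sigma(x))$ and $x_1=\tfrac12(x-\sigma(x))$; and since $\sigma$ fixes $A_0$ pointwise, the ideal of $A$ generated by any subset of $A_0$ is $\sigma$-invariant, hence graded. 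Using this, $A$ is a semiprime algebra: if $I$ is an ideal of $A$ with $I^2=0$, then $\sigma(I)$ is an ideal with $\sigma(I)^2=0$, so $I\cap\sigma(I)$ is a graded ideal with $(I\cap\sigma(I))^2\subseteq I^2=0$, hence $I\cap\sigma(I)=0$ by semiprimeness of the superalgebra $A$; then $I\sigma(I)$ and $\sigma(I)I$ lie in $I\cap\sigma(I)=0$, so $(I+\sigma(I))^2=0$, and as $I+\sigma(I)$ is a graded ideal, $I+\sigma(I)=0$, whence $I=0$.

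Next, $A_0$ is a semiprime algebra. Let $J$ be an ideal of $A_0$ with $J^2=0$, so $JA_0J\subseteq J^2=0$ because $JA_0\subseteq J$, and let $I$ be the ideal of $A$ generated by $J$, which is graded by the remark above. The key elementary computation is that $j_1b_1j_2b_2j_3=0$ for all $j_1,j_2,j_3\in J$ and $b_1,b_2\in A$: splitting $b_1,b_2$ into homogeneous parts, if some $b_i$ is even the product already lies in $JA_0J=0$; if $b_1$ and $b_2$ are both odd then $b_1j_2b_2\in A_1A_0A_1\subseteq A_0$, so the product again lies in $JA_0J=0$. Since every element of $I$ is a finite sum of terms of the form $j$, $aj$, $ja$, $aja'$ with $j\in J$ and $a,a'\in A$, every product of three elements of $I$ is a sum of terms each containing a block $j_1b_1j_2b_2j_3=0$ (with $b_i$ possibly trivial, in which case two of the $j$'s are adjacent and $J^2=0$ already gives $0$); hence $I^3=0$. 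Then $I^2$, or $I$ itself if $I^2=0$, is a graded ideal of square zero, so it vanishes by semiprimeness of the superalgebra $A$, and $J\subseteq I=0$.

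For the last assertion, assume $A$ is a prime superalgebra but not a prime algebra. By the semiprimeness just established there is a nonzero ideal $I$ of $A$ with $I^{\ast}:=\mathrm{Ann}_A(I)\neq 0$ and $II^{\ast}=I^{\ast}I=0$. If $\sigma(I)=I$, then $I$ and $I^{\ast}$ would be nonzero graded ideals of zero product, contradicting primeness of the superalgebra $A$; so $\sigma(I)\neq I$. Now $I+\sigma(I)$ and $\mathrm{Ann}_A(I+\sigma(I))=I^{\ast}\cap\sigma(I^{\ast})$ are graded ideals of zero product with $I+\sigma(I)\neq 0$, which forces $I^{\ast}\cap\sigma(I^{\ast})=0$, and likewise $I\cap\sigma(I)=0$. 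Hence $\mathcal I:=I+\sigma(I)=I\oplus\sigma(I)$ is a ring direct sum of ideals of $A$ interchanged by $\sigma$, with zero annihilator, i.e.\ an essential graded ideal; the map $i\mapsto i+\sigma(i)$ is a ring isomorphism of $I$ onto $\mathcal I_0$, and a short computation (using $I\cap\sigma(I)=0$ and semiprimeness) shows that $\mathcal I_0$ is essential in $A_0$. Finally $I$ must be a prime ring: if $B$ is an ideal of $I$ with $B^{\circ}:=\mathrm{Ann}_I(B)\neq 0$, then $BA'B^{\circ}\subseteq B\cap B^{\circ}=0$ for every subset $A'$ of $A$ (using that $B$ and $B^{\circ}$ are ideals of $I$, $I$ is an ideal of $A$, and $I$ is semiprime), so the $A$-ideals generated by $B\cup\sigma(B)$ and by $B^{\circ}\cup\sigma(B^{\circ})$ are, by this together with $I\sigma(I)=\sigma(I)I=0$, nonzero graded ideals of zero product, contradicting primeness of the superalgebra $A$. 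Thus $A_0$ has the essential ideal $\mathcal I_0\cong I$, which is prime, so $A_0$ is a prime algebra.

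The step I expect to be the main obstacle is this last one: converting the failure of $A$ to be a prime algebra, through $\sigma$ and the primeness of the superalgebra $A$, into the primeness of the essential graded ideal $\mathcal I=I\oplus\sigma(I)$ and hence of its even part $\mathcal I_0\cong I$, together with the transfer of essentiality down to $A_0$. That bookkeeping is the delicate part, and it is exactly where I would follow the arguments of \cite{M} most closely; the identity $j_1b_1j_2b_2j_3=0$ for the semiprimeness of $A_0$ is the only other point that needs a moment's care.
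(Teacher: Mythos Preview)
The paper does not prove this lemma at all; it is stated with a bare citation to Montaner \cite{M}, Lemmata~1.2 and~1.3, and no argument is given. So there is no ``paper's own proof'' to compare against.

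Your proof is correct and follows what is essentially the standard line one would expect in \cite{M}. The semiprimeness of $A$ via the grading automorphism $\sigma$ is clean; the identity $j_1b_1j_2b_2j_3=0$ giving $I^3=0$ for the semiprimeness of $A_0$ is the right computation. For the prime assertion, your passage to the essential graded ideal $\mathcal I=I\oplus\sigma(I)$ and the isomorphism $I\cong\mathcal I_0$ is sound: the essentiality of $\mathcal I_0$ in $A_0$ does follow from the short computation you allude to (if $a_0\mathcal I_0=0$ then $a_0i=-\sigma(a_0i)\in I\cap\sigma(I)=0$, so $a_0\mathcal I=0$, whence $a_0=0$), and the primeness of $I$ follows from your annihilator argument once one notes that $\tilde B,\widetilde{B^\circ}\subseteq I$ and $\sigma(\tilde B),\sigma(\widetilde{B^\circ})\subseteq\sigma(I)$, so all cross products vanish because $I\sigma(I)=\sigma(I)I=0$. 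One small point worth making explicit is that an ideal of a semiprime ring is itself semiprime (needed for $B\cap B^\circ=0$), but this is routine. The final transfer ``$A_0$ has an essential prime ideal $\Rightarrow$ $A_0$ prime'' is standard.
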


\medskip

\begin{lem}(\cite {Go-L-S}, Theorem 2.1)
Let $A$ be a prime nontrivial associative superalgebra. If $L$ is a Lie ideal of $A$,  then either
$L\subseteq Z$ or $L$ is dense in $A$, except if $A$ is a central order in $C(2)$
\end{lem}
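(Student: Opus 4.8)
\medskip

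\noindent\emph{Sketch of proof.} Since this is Theorem~2.1 of \cite{Go-L-S}, we only describe the line of attack. Assume $L\not\subseteq Z$; the goal is to show that, unless $A$ is a central order in $C(2)$, $L$ is dense in $A$. The first step is to strip away the scalars: denseness is invariant under central localization, so one passes to the central closure $Z^{-1}A$ when $Z\neq0$ --- and to the closure relative to the extended centroid when $Z=0$ --- and reduces to the case where $A$ is a central prime associative superalgebra over a field. By Montaner's lemma (\cite{M}, Lemmata~1.2, 1.3), either $A$ or $A_0$ is prime as an ungraded ring, and the plan is to combine this with the ungraded theorem quoted above, which (for $V=U$) says that a Lie ideal $U$ of a prime, $2$-torsion-free ring $R$ is either central or satisfies $0\neq[M,R]\subseteq U$ for a nonzero ideal $M$.

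\medskip

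\noindent Write $L=L_0\oplus L_1$. The Lie-superideal hypothesis unpacks as $[A_0,L]\subseteq L$ and $[A_1,L_0]\subseteq L_1$ (ordinary brackets), together with the Jordan-type relation $a_1b_1+b_1a_1\in L_0$ for $a_1\in A_1$, $b_1\in L_1$ --- the super-bracket of two odd elements being the anticommutator. Let $\widetilde L$ be the ordinary Lie ideal of the ring $A$ generated by $L$; since brackets with $A_0$ keep one inside $L$, the gap between $L$ and $\widetilde L$ is governed entirely by the ordinary commutators $a_1b_1-b_1a_1$ ($a_1\in A_1$, $b_1\in L_1$), which need not lie in $L$. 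One then applies the ungraded theorem to $\widetilde L$ --- and, when it is $A_0$ rather than $A$ that is prime, to the Lie ideal of $A_0$ produced from $L_0$ and the anticommutators $a_1b_1+b_1a_1$ --- to reach the alternative: either this Lie ideal is central in the ambient ungraded ring, or it contains $[M,A]$ for a nonzero ideal $M$ of $A$ (resp.\ $[M_0,A_0]$ for a nonzero ideal $M_0$ of $A_0$). In the central case one deduces $L\subseteq Z$; the only delicate point is to rule out a nonzero odd element lying in the ungraded center but not the super-center, and such an element $z$ would satisfy $zAz=0$, impossible in a prime ring. In the remaining case one produces from $M$ (resp.\ $M_0$) a nonzero \emph{graded} ideal of $A$ and pushes the commutator information back down from $\widetilde L$ to $L$ to conclude that $L$ is dense.

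\medskip

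\noindent The step I expect to be hardest is exactly this last descent, together with isolating the exception. Because $\widetilde L$ may be strictly larger than $L$ --- precisely through those ordinary commutators $a_1b_1-b_1a_1$ that carry no Lie-superideal meaning --- knowing that $\widetilde L$ is large does not at once give that $L$ is large; one must feed the Jordan relation $a_1b_1+b_1a_1\in L_0$ and the quoted lemma ``$[a,[a,L]]=0$ implies $[a,L]=0$'' back into the argument to control the defect. This recovery succeeds for the matrix and queer types and for the Clifford superalgebras $C(n)$ with $n\neq2$ (for $C(3)$, say, one checks that $[C(3)^{-},C(3)^{-}]$ still contains the non-solvable Lie ideal $[A,A]$, hence is dense; and $C(1)$ is commutative), but it fails for a central order in $C(2)$, where $\widetilde L$ may be all of $A$ while $L=Z\oplus A_1=[C(2)^{-},C(2)^{-}]$ is a proper, non-central Lie superideal that is not dense. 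Hence $C(2)$ is the unique exception.
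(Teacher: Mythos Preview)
The paper does not prove this statement at all: Lemma~1.6 is stated purely as a citation of Theorem~2.1 in \cite{Go-L-S}, with no accompanying argument, so there is no ``paper's own proof'' to compare your sketch against. You have in fact already recognized this in your opening line.

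That said, a few remarks on the sketch itself. Your overall plan --- reduce to the central closure, invoke Montaner's dichotomy that either $A$ or $A_0$ is prime as an ungraded ring, and then transport the classical ungraded result --- is a natural line of attack and is in the spirit of how such transfer arguments are usually run. However, the passage through the \emph{ungraded} Lie ideal $\widetilde L$ generated by $L$ is more delicate than your sketch suggests: the gap between $\widetilde L$ and $L$ is controlled by the ordinary commutators $a_1b_1-b_1a_1$ with $a_1\in A_1$, $b_1\in L_1$, and you give no concrete mechanism for ``pushing the commutator information back down from $\widetilde L$ to $L$'' beyond naming the Jordan relation and Lemma~1.4. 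In practice the argument in \cite{Go-L-S} (and its predecessor in \cite{Go-S} for the simple case) works more directly with the graded structure and with Montaner's results in \cite{M}, rather than by enlarging to an ungraded Lie ideal and then descending; the descent you flag as ``the step I expect to be hardest'' is not actually carried out in your outline, and it is not clear it can be made to work without essentially reproving the graded statement from scratch. Your identification of the $C(2)$ exception via the Lie superideal $Z\oplus A_1$ is correct in spirit, but the case analysis over the Clifford types is asserted rather than argued.

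In short: since the paper offers no proof here, your sketch cannot be judged against it; as a standalone outline it is plausible in its broad strokes but leaves the key descent step as a promissory note.
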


\medskip

\begin{lem}(\cite {L})
Let $A$ be a prime superalgebra,  $L$  a Lie ideal of $A$ such that $L$ is dense in $A$, and $v\in A_i$ such that   $vLv=0$, then $v=0$.
\end{lem}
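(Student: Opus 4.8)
The statement is a non-degeneracy assertion, and my plan is to reach it through primeness. The basic tool is: \emph{if $K$ is a nonzero graded ideal of $A$ and $vKv=0$ or $KvK=0$, then $v=0$} — indeed if $vKv=0$ then $(KvK)^{2}=K(vKv)K=0$, so $KvK=0$ by primeness, and $KvK=0$ with $K\neq 0$ forces $v=0$ again by primeness (the ideal generated by $v$ then sits in an ideal killed by $K$ on both sides). I read ``$L$ dense in $A$'' in its usual form: there is a nonzero graded ideal $I$ of $A$ with $[I,A]\subseteq L$, so in particular $\mathrm{Ann}_{\ell}(L)=\mathrm{Ann}_{r}(L)=0$; under the stronger reading ``$aLb\neq 0$ for all nonzero homogeneous $a,b$'' the lemma is immediate by taking $a=b=v$. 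The strategy is to use density and $vLv=0$ to force either $v=0$ outright or $v^{2}=0$, and then to dispose of the case $v^{2}=0$ separately. The hypothesis $v\in A_{i}$ serves only to keep everything graded.

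For the first step: since $[l,x]\in L$ for homogeneous $l\in L$, $x\in A$, the hypothesis $vLv=0$ gives the key identity $vlxv=(-1)^{\bar l\bar x}vxlv$. Replacing $x$ by $xv$ and using $vlv=0$ yields $vlxv^{2}=0$; multiplying the identity on the right by $v$ and inserting this gives $vxlv^{2}=0$; replacing $l$ by $[l,y]$ and expanding (with $v\,xy\,l\,v^{2}=0$) gives $vxlyv^{2}=0$, i.e.\ $vALAv^{2}=0$. As $[I,A]\subseteq L$, this upgrades to $vA\langle[I,A]\rangle Av^{2}=0$, that is, $vKv^{2}=0$ with $K:=A\langle[I,A]\rangle A$. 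Here $K$ is a nonzero graded ideal: $A$ is nontrivial hence not supercommutative, so $[A,A]\neq 0$, hence $[I,A]\neq 0$ (a nonzero graded ideal inside the supercentre would force $I[A,A]=0$, hence $A$ supercommutative, hence trivial), so $\langle[I,A]\rangle\neq 0$, and a nonzero ideal $J_{0}$ of a prime ring satisfies $AJ_{0}A\neq 0$. Now $vKv^{2}=0$ gives $K(vKv^{2})K=(KvK)(v^{2}K)=0$; since $KvK$ is a graded ideal and $v^{2}K$ a graded right ideal, primeness forces $KvK=0$ — whence $v=0$ and we are done — or $v^{2}K=0$, and then $v^{2}=0$.

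It remains to treat $v^{2}=0$, where the identities above become empty. If $v\in A_{0}$, then $[v,[v,l]]=v^{2}l+lv^{2}=0$ for all $l\in L$, so $[v,[v,L]]=0$; since $A$ is a semiprime algebra (the lemma of Montaner quoted above) and $[v,L]\subseteq L$, Herstein's lemma quoted above (in its superalgebra form) gives $[v,L]=0$; the super-centraliser of a dense Lie ideal lies in $Z$ — the graded analogue of a classical fact of Herstein, obtained by checking that this centraliser is itself a graded Lie ideal and, by Theorem~2.1 of \cite{Go-L-S} quoted above, cannot be dense — so $v\in Z$, and then $v$ central with $v^{2}=0$ makes $AvA$ square-zero, forcing $v=0$. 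If $v\in A_{1}$ with $v^{2}=0$, these manipulations collapse; one observes that $v\,\mathrm{ad}_{v}(l)=\mathrm{ad}_{v}(l)\,v=0$ for all $l\in L$, so $\mathrm{ad}_{v}(L)\subseteq L$ sits in the common two-sided annihilator of $v$, and then one must bring in the graded structure — the interaction of $v$, $v^{2}\in A_{0}$ and the even part $A_{0}$ — together with the description of dense Lie ideals of a prime superalgebra, to conclude $v=0$.

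I expect the genuine obstacle to be exactly this last subcase, $v\in A_{1}$ with $v^{2}=0$: the elementary multiplicative arguments degenerate, and one has to descend to $A_{0}$ and use the finer structure theory of dense Lie ideals, all while tracking the Koszul signs. Everything else reduces to routine substitution together with the repeated, essentially trivial, use of primeness in the form ``a nonzero ideal has zero left and right annihilator''.
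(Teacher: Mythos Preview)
First, note that the paper does not supply its own proof of this lemma: it is quoted from the author's forthcoming paper \cite{L} and used throughout Section~2 as a black box. So there is no in-paper argument to compare your proposal against.

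On your proposal itself: the reduction to $v^{2}=0$ is sound. One small correction: the paper defines ``$L$ dense'' to mean that the associative subalgebra $\overline L$ generated by $L$ contains a nonzero ideal, \emph{not} that $[I,A]\subseteq L$ for some nonzero ideal $I$. This does not harm your argument, since $vALAv^{2}=0$ iterates immediately to $vA\,\overline L\,Av^{2}=0$ (absorb any product $l_{1}\cdots l_{n}$ by taking $b'=l_{2}\cdots l_{n}b$), whence $vKv^{2}=0$ for the nonzero ideal $K=ANA$; your primeness dichotomy then gives $v=0$ or $v^{2}=0$.

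The genuine gap is exactly where you locate it, and it is not a formality. For $v\in A_{1}$ with $v^{2}=0$ you have only the observation that $[v,L]$ annihilates $v$ on both sides; the appeal to ``descending to $A_{0}$'' and ``finer structure theory'' is a promissory note, not an argument. Your treatment of $v\in A_{0}$, $v^{2}=0$ is also incomplete. Lemma~1.4 (Herstein) is stated for \emph{ordinary} Lie ideals of a semiprime algebra, while $L$ is only a Lie ideal for the superbracket; at best you can restrict to $A_{0}$ and obtain $[v,L_{0}]=0$, but $[v,L_{1}]=0$ does not follow from the cited lemma. Moreover, the centralizer step ``$C(L)\subseteq Z$'' leans on Lemma~1.6 or Lemma~2.1, both of which carry the side hypothesis that $A$ is not an order in $C(2)$, a hypothesis absent from Lemma~1.7. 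Since the entire content of the lemma resides in the square-zero case --- the reduction being routine --- what you have is an outline with the hard part missing.
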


\medskip

\begin{lem} (\cite {L})
Let $A$ be a prime superalgebra, $L$ a Lie ideal of $A$ such that $L$ is dense in $A$, and $V$ a Lie subalgebra of $A$ such that $[V, L]\subseteq V$. If $v^2=0$ for every $v\in V_i$, then $V_i=0$.
\end{lem}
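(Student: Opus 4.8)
The plan is to reduce everything to the lemma immediately preceding this one. Fix $i$ and a homogeneous $v\in V_i$, so $v^2=0$; it suffices to prove $vLv=0$, for then $v=0$ by that lemma and hence $V_i=0$. First I would linearize: from $(v+w)^2=0$ one gets $vw+wv=0$ for all $v,w\in V_i$. Then I would extract the elementary consequences of $[V,L]\subseteq V$: for homogeneous $x\in L$ of degree $j$, put $w=[v,x]=vx-(-1)^{ij}xv\in V$; using $v^2=0$ one computes $wv=vxv$, $vw=-(-1)^{ij}vxv$ and $[[v,x],v]=(1+(-1)^{i})\,vxv$. So when $i=0$ we get $2vxv=[[v,x],v]\in V$, hence (since $\tfrac12\in\Phi$) $vxv\in V$ for every homogeneous $x$, i.e.\ $vLv\subseteq V$; when $i=1$ these double brackets vanish identically and a different route is needed.

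In the case $i=0$, I would exploit that for $x\in L_0$ the element $[v,x]=vx-xv$ lies in $V_0=V_i$, hence squares to zero; expanding $(vx-xv)^2=0$ with $v^2=0$ gives $vxvxv=0$, and by linearization $vxvyv+vyvxv=0$ for $x,y\in L_0$. The aim is to promote these ``triple--sandwich'' identities from $L$ to all of $A$: since $L$ is a Lie ideal, $[y,a]\in L$ for every homogeneous $a$, so one may feed commutators into the identities, and then the density of $L$ in $A$ --- which is precisely what allows such commutators to be traded for arbitrary elements of $A$ --- together with primeness forces $pAp=0$ for $p=vxv$, so $p=0$; thus $vL_0v=0$, and the analogous computation for $x\in L_1$ gives $vL_1v=0$, whence $vLv=0$.

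In the case $i=1$ the commutators with $v$ degenerate, so I would pass to $A_0$, which is semiprime (and, invoking \cite{M} and the primeness dichotomy for $A$, may be taken prime), and to the Lie ideal $L_0$ of $A_0$. The relevant observations are that $[v,L_0]\subseteq V_1$ consists of square-zero elements and is stable under taking brackets with $L_0$, and that for $u\in V_1$ the product $uv$ is square-zero in $A_0$; from these, by carrying the $i=0$ analysis out inside $A_0$ (equivalently, by the non-graded form of the statement), one shows the relevant data collapse, and feeding this back through $v[v,L_0]=0$ yields $vL_0v=0$. A final primeness/density argument in $A_0$ then upgrades $vL_0v=0$ to $vLv=0$.

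The step I expect to be the genuine obstacle is this ``transfer'': promoting relations such as $vxvxv=0$ and $vxvyv=-vyvxv$, valid a priori only for $x,y\in L$, to $vAv\cdot A\cdot vAv=0$. That is exactly where the precise meaning of ``$L$ is dense in $A$'' has to be combined with primeness, and it is also why the odd case must be rerouted through $A_0$: with an odd square-zero $v$ even the inclusion $vLv\subseteq V$ is unavailable, so one is forced to lean on the dichotomy of \cite{M} and on the non-graded theory.
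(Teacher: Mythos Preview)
First, a framing remark: in this paper Lemma~1.8 is \emph{quoted} from \cite{L} and no proof is given here, so there is no argument in the present paper to compare yours against. What follows is an assessment of your sketch on its own merits.

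Your reduction to Lemma~1.7 (show $vLv=0$) and your identities in the even case are correct: for $v\in V_0$ with $v^2=0$ and $x\in L_0$, the element $[v,x]\in V_0$ has square zero, and right-multiplying $(vx-xv)^2=0$ by $v$ indeed yields $vxvxv=0$, whence $vxvyv+vyvxv=0$ by linearization. But two genuine gaps remain.

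\textbf{(a) The $vL_1v$ step is not ``analogous''.} For $v\in V_0$ and $x\in L_1$ one has $[v,x]\in V_1$ and $vxv\in V_1$; the hypothesis $w^2=0$ is only assumed on $V_i=V_0$, so neither $[v,x]^2=0$ nor $(vxv)^2=0$ is available. Your line ``the analogous computation for $x\in L_1$ gives $vL_1v=0$'' therefore does not go through as stated; you need a separate mechanism (e.g.\ first obtaining $vL_0v=0$, then using $x^2\in L_0$ for $x\in L_1$ via $2x^2=[x,x]\in L$, together with density/primeness) rather than a repeat of the even-$x$ calculation.

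\textbf{(b) The ``transfer'' from $L$ to $A$ is the whole proof, and it is not written.} You correctly flag as the obstacle the step that upgrades $vxvyv+vyvxv=0$ (for $x,y\in L_0$) to $vAv=0$; density of $L$ means only that $\bar L$ contains a nonzero ideal, not that one may substitute arbitrary $a\in A$ for $x$. The passage requires inserting commutators $[y,a]$ and then an honest induction through products in $\bar L$, plus a primeness argument; as written this is a plan, not a proof. The $i=1$ case is likewise only gestured at: invoking ``the non-graded form of the statement'' inside $A_0$ presupposes exactly the kind of transfer you have not yet carried out, and the sentence ``feeding this back through $v[v,L_0]=0$ yields $vL_0v=0$'' conflates $v[v,L_0]$ with $vL_0v$ (for odd $v$ one has $v[v,x]=-vxv$, so these are equivalent, but the step producing $v[v,L_0]=0$ is not supplied).

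In short: the skeleton is reasonable and the initial identities are right, but the two places you yourself mark as delicate are where the actual content lives, and neither is done.
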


\medskip

We point out that the bracket product in Lemma 1.1, Theorem 1.2, Theorem 1.3 and Lemma 1.4 is the usual one: $[a,b]=ab-ba$, but the bracket
product in Lemma 1.8 is the superbracket $[x_i,y_j]_s=x_iy_j-(-1)^{ij}y_jx_i$ for $x_i\in
A_i, y_j\in A_j$ homogenous elements. In fact, the superbracket product coincides with the usual
bracket if one of the arguments belongs to the even part of $A$. In the following, to simplify the
notation, we will denote both in the usual way $[\ , \ ]$ but we will understand that it is the
superbracket if we are in a superalgebra.

\medskip

Also, from now on, by an element $a\in M$, with $M$ any  $\Phi$-submodule of a superalgebra $A$, we will always understand  a homogenous element $a\in M$, that is, $a\in M_0 \cup M _1$, unless otherwise stated. 

\bigskip

\section {Lie structure of an associative superalgebra.}

\bigskip
 
Let $A$ be an associative superalgebra and $M$ be a $\Phi$-submodule of $A$. Denote by $\overline M$ the subalgebra of $A$ generated by $M$. We will say that $M$ is dense in $A$ if $\overline M$ contains a nonzero ideal of $A$.

\begin{lem}
Let $A$ be a prime superalgebra such that it is not an order in $C(2)$. Let $L$ be a Lie ideal of $A$. Then either $L\subseteq Z$ or $C(L) \subseteq Z$, where $C(L)= \{ x \in A : [x, L]=0\}$.
\end{lem}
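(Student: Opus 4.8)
The plan is to assume $L\not\subseteq Z$ and deduce $C(L)\subseteq Z$ (there is nothing to prove if $L\subseteq Z$). Since $A$ is a nontrivial prime superalgebra which is not an order in $C(2)$, Lemma 1.6 applies to the Lie ideal $L$: as $L\not\subseteq Z$, $L$ is dense, that is, the subalgebra $\overline L$ generated by $L$ contains a nonzero graded ideal $I$ of $A$. Now fix a homogeneous $x\in C(L)$, so $[x,l]=0$ (superbracket) for all $l\in L$. The graded Leibniz rule $[x,ab]=[x,a]b+(-1)^{\bar x\bar a}a[x,b]$ shows, by induction on the length of a product, that $x$ supercommutes with every product of homogeneous elements of $L$, hence with all of $\overline L$; in particular $[x,I]=0$.

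The first real step is to promote $[x,I]=0$ to $[x,A]=0$. For homogeneous $a\in I$ and $r\in A$ we have $ar\in I$ and $[x,a]=0$, so expanding $0=[x,ar]$ by the Leibniz rule leaves $a[x,r]=0$; thus $I[x,r]=0$ for every $r$. Taking a nonzero homogeneous $a_{0}\in I$ (possible since $I$ is a nonzero graded ideal), we get $a_{0}A[x,r]\subseteq I[x,r]=0$ with $[x,r]$ homogeneous, so primeness of $A$ gives $[x,r]=0$. Hence $x$ is supercentral.

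If $x$ is even, supercentrality says exactly that $x$ lies in the centre of $A$, and being even, $x\in Z$. If $x$ is odd, I claim $x=0$. For $b\in A_0$ we have $bx=xb$ by supercentrality, so
\[
0=[x,xb]=x(xb)+(xb)x=x^{2}b+x(bx)=x^{2}b+x(xb)=2x^{2}b,
\]
and $1/2\in\Phi$ forces $x^{2}A_0=0$; in particular $(x^{2})^{2}\in x^{2}A_0=0$. By the Leibniz rule $x^{2}$ is central, so $x^{2}lx^{2}=(x^{2})^{2}l=0$ for $l\in L$, i.e.\ $x^{2}Lx^{2}=0$, and Lemma 1.7 gives $x^{2}=0$. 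Then, using $lx=\pm xl$ (supercentrality), $xlx=\pm x^{2}l=0$ for $l\in L$, i.e.\ $xLx=0$, and Lemma 1.7 again gives $x=0\in Z$. Thus $C(L)\subseteq Z$.

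I expect the even case to be a routine transcription of the classical argument of Herstein. The genuinely new point, and the main obstacle, is the odd case: one must rule out the a priori possibility of a nonzero odd supercentral element, and this is precisely where the identity $0=[x,xb]=2x^{2}b$ together with $1/2\in\Phi$ is essential (the hypotheses that $A$ is nontrivial and not an order in $C(2)$ enter only through Lemma 1.6, which guarantees density of $L$). The one further thing to watch is the middle step's use of primeness in the form ``$I$ a nonzero graded ideal, $Iw=0$ with $w$ homogeneous $\Rightarrow w=0$'', which should be applied with the usual care appropriate to the possibly non-unital, graded setting.
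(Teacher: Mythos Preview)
Your argument is correct, but it follows a different route from the paper's. The paper instead observes that $C(L)$ is simultaneously a Lie ideal and a subalgebra of $A$, and then invokes the structural dichotomy of \cite{Go-S}, Theorem~4.1: either $C(L)\subseteq Z$ or $C(L)$ is dense. In the dense case a nonzero ideal $I\subseteq C(L)$ satisfies $[I,L]=0$, and Lemma~2.3 of \cite{L-S} forces $L\subseteq Z$. So the paper applies density to $C(L)$, while you apply it to $L$ and then argue elementwise via the Leibniz rule and primeness. Your approach has the advantage of staying entirely within the preliminary lemmas already quoted in this paper (Lemmata~1.6 and~1.7) rather than reaching for two further external results; the paper's approach is shorter and more conceptual, treating $C(L)$ globally. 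One small remark on efficiency: in your odd case, once $x$ is supercentral you have $[x,x]=2x^{2}=0$ immediately, so $x^{2}=0$ without the detour through $x^{2}A_{0}=0$ and Lemma~1.7; and then $xax=\pm x^{2}a=0$ for every homogeneous $a\in A$ gives $xAx=0$, so primeness alone yields $x=0$ without a second appeal to Lemma~1.7.
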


\begin{proof}[Proof:]
We notice that $C(L)$ is a Lie ideal and a subalgebra of $A$.  Indeed, let $x, y \in C(L)$, $a\in A$ and $u\in L$,
$$ [[x,a],u]= - (-1)^{\bar x \bar a+ \bar x \bar u}[[a,u],x ] - (-1)^{\bar u \bar x + \bar u \bar a } [[u, x], a]=0$$
$$[xy, u]= x[y,u] + (-1)^{\bar u \bar y}[x, u]y=0.$$
{\noindent So, by Theorem 4.1 and its proof  in \cite {Go-S} either  $C(L)\subseteq Z$ or $C(L)$ is dense in $A$. But if $C(L)$ is dense in $A$, then there exists a nonzero ideal $I$ of $A$ such that $[I,L]=0$, and from Lemma 2.3 in \cite {L-S} $L\subseteq Z$}

\end{proof}

\begin{lem}
Let $A$ be a prime superalgebra such that it is not an order in $C(n), n= 1, 2, 3$. Let $W$ be a subalgebra of $A$ and a Lie ideal of $[A,A]$. Then either $W\subseteq Z$ or $W$ is dense in $A$.
\end{lem}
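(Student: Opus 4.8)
The plan is first to dispose of a degenerate case and then to play ``$W$ is a Lie ideal of $[A,A]$'' against ``$W$ is a subalgebra''. Applying Lemma 1.6 to the Lie ideal $[A,A]$ of $A$ (legitimate, since $A$ is not an order in $C(2)$), either $[A,A]\subseteq Z$ or $[A,A]$ is dense in $A$. If $[A,A]\subseteq Z$, a direct computation using $\tfrac{1}{2}\in\Phi$, Lemma 1.5, the classical fact that a prime $2$-torsion-free ring with central commutators is commutative, and that $Z$ is purely even, shows that $A_0=Z$ and that $A$ is a central order in $C(1)$, contrary to hypothesis; so this case does not occur. Hence I may assume $[A,A]$ is dense (in particular $[A,A]\not\subseteq Z$), and, supposing $W\not\subseteq Z$, I must show that $W$ is dense.

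Second, collect commutator identities via iterated Jacobi. Write $W^{(0)}=W$ and $W^{(k)}=[W^{(k-1)},W^{(k-1)}]$. Every superbracket of elements of $A$ lies in $[A,A]$, so $[W,A]\subseteq[A,A]$ and hence $[[W,W],A]\subseteq[W,[A,A]]\subseteq W$ by the Jacobi identity; inductively $[W^{(k)},A]\subseteq W^{(k-1)}$ and $[W^{(k)},[A,A]]\subseteq W^{(k)}$ for all $k\ge1$. Thus each $W^{(k)}$ is a Lie subalgebra and a Lie ideal of $[A,A]$ contained in $W$, and $[x,A]\subseteq W$ for every $x\in W^{(1)}$.

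Next, suppose $[W,W]=0$, i.e.\ $W$ is supercommutative. Then $v^2=\tfrac{1}{2}[v,v]=0$ for all $v\in W_1$, so Lemma 1.8, applied with the dense Lie ideal $[A,A]$ and $V=W$, gives $W_1=0$; thus $W=W_0\subseteq A_0$. For $w\in W_0$ we have $[w,[w,[A,A]_0]]\subseteq[W_0,W_0]=0$, and since $A_0$ is a semiprime algebra (Lemma 1.5) and $[A,A]_0$ is a Lie ideal of it, Lemma 1.1 gives $[w,[A,A]_0]=0$; as also $[w,[A,A]_1]\subseteq W\cap A_1=W_1=0$, we get $w\in C([A,A])\subseteq Z$ by Lemma 2.1 (recall $[A,A]\not\subseteq Z$ and $A$ is not an order in $C(2)$). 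Hence $W\subseteq Z$, a contradiction; so $[W,W]\ne0$. Moreover, if some $W^{(k)}$ with $k\ge1$ is nonzero and perfect, then $[W^{(k)},A]\subseteq W^{(k)}$ by the identities above, so $W^{(k)}$ is a nonzero (hence noncentral) Lie ideal of $A$, and $\overline{W^{(k)}}\subseteq W$ is then a noncentral subalgebra which is a Lie ideal of $A$, so it is dense by Theorem 4.1 of \cite{Go-S} (applicable since $A$ is not an order in $C(n)$, $n\le3$), and therefore $W$ is dense.

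It remains to treat the general case $[W,W]\ne0$. If every homogeneous element of $W^{(1)}$ squared to zero, then Lemma 1.8 applied to $V=W^{(1)}$ (a Lie subalgebra and Lie ideal of the dense $[A,A]$) would force $W^{(1)}=0$; so choose a homogeneous $x\in W^{(1)}$ with $x^2\ne0$. Since $[x,A]\subseteq W$, $x\in W$ and $W\cdot W\subseteq W$, the identity $[a,x^2]=-x[x,a]-(-1)^{\bar x\bar a}[x,a]x$ shows that $y:=x^2$ is a nonzero even element of $W$ with $[A,y]\subseteq W$. The heart of the proof — and the step I expect to be the main obstacle — is to finish from here in the style of \cite{H3} and \cite{La-M}: either every such $y$ lies in $Z$, in which case one propagates centrality through $W$ by repeated use of Lemmas 1.1, 1.7, 1.8 and 2.1 to get $W\subseteq Z$, against our assumption; or some $y\notin Z$, and then $A[A,y]A$ is a nonzero ideal of $A$ from which one extracts a nonzero ideal of $A$ contained in $W$, keeping careful track of the grading and of the excluded small Clifford superalgebras $C(n)$, $n\le3$, exactly as in the non-graded arguments.
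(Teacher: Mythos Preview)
Your proposal has a genuine gap and also misses the key shortcut. The final paragraph is not a proof: you produce an even $y=x^{2}\ne 0$ with $[A,y]\subseteq W$ and then declare that one can ``finish from here in the style of \cite{H3} and \cite{La-M}'', but you supply neither the extraction of a nonzero ideal of $A$ inside $W$ when $y\notin Z$, nor the centrality-propagation argument when every such $y$ lies in $Z$. Neither step is routine in the graded setting; for instance, from $y\in W$ and $[A,y]\subseteq W$ one does not get anything close to $AyA\subseteq W$, and your phrase ``repeated use of Lemmas 1.1, 1.7, 1.8 and 2.1'' does not describe any concrete mechanism. (Incidentally, the $[a,[a,L]]=0\Rightarrow[a,L]=0$ result you invoke is Lemma~1.4 in the paper's numbering, not Lemma~1.1.)

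More to the point, the hypothesis ``$W$ is a Lie ideal of $[A,A]$'' is exactly the hypothesis of Theorem~3.3 in \cite{M}, which for $A$ prime says that either $W\subseteq Z$ or there is a nonzero ideal $I$ of $A$ with $0\ne[I,A]\subseteq W$. The paper simply quotes this: assuming $W\not\subseteq Z$, one has such an $I$; since $[I,A]$ is a nonzero Lie ideal of $A$, Lemma~1.6 makes it either dense (and then $W$ is dense) or contained in $Z$. In the latter case $Z\ne0$, one localizes, $Z^{-1}I$ picks up an invertible element so $Z^{-1}I=Z^{-1}A$, whence $[Z^{-1}A,Z^{-1}A]\subseteq Z^{-1}Z$, and Lemma~2.6 of \cite{M} forces $A$ to be an order in $C(n)$ with $n\le 3$, against hypothesis. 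Your derived-series and square-element machinery is, in effect, an attempt to re-prove Montaner's Theorem~3.3 from scratch, and as written it does not get there; the paper avoids all of it by citing that theorem directly.
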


\begin{proof}[Proof:]
From Theorem 3.3 in \cite {M}, we know that either $W\subseteq Z$ or there exists an ideal $I$ of $A$ such that $0\not=[I, A]\subseteq W$. Suppose that $W\nsubseteq Z$ and so $I$ is an ideal of $A$ such that $0\not= [I, A]\subseteq W$.  Notice that $[I,A]$ is a nonzero Lie ideal of $A$. Therefore, by Lemma 1.6, either $[I,A]\subseteq Z$ or $[I,A]$ is dense in $A$. If $[I,A]$ is dense in $A$, then $W$ is dense in $A$. If $[I, A]\subseteq Z$, we can localize $A$ by $Z$ and consider $Z^{-1}A$. Then $0\not=[Z^{-1}I, Z^{-1}A]\subseteq Z^{-1}Z$. Therefore $Z^{-1}I$ has invertible elements and so $Z^{-1}I = Z^{-1}A$. But then, since $[I,A]\subseteq Z$, $[Z^{-1}A, Z^{-1}A]\subseteq Z^{-1}Z$, that is, $[ [Z^{-1}A, Z^{-1}A], Z^{-1}A]=0$. From Lemma 2.6 in \cite {M}, $A$ is $C(n)$ with $n= 1, 2 $ or $3$, a contradiction.
\end{proof}

\begin{lem}
Let $A$ be a prime superalgebra such that it is not an order in $C(n)$ with $n=1, 2,3$, and $L, U$  Lie ideals of $A$ such that $[L, U]\subseteq Z$. Then either $L\subseteq Z$ or $U\subseteq Z$. 
\end{lem}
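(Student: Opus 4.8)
The plan is to assume $L\not\subseteq Z$ and $U\not\subseteq Z$ and to derive that $A$ must be a central order in $C(n)$ for some $n\le 3$, contrary to hypothesis. If $Z=0$ this is immediate: then $[L,U]\subseteq Z=0$ gives $U\subseteq C(L)$, and since $L\not\subseteq Z$, Lemma 2.1 (applicable because $A$ is not an order in $C(2)$) forces $C(L)\subseteq Z=0$, so $U=0\subseteq Z$, a contradiction. So assume $Z\neq 0$ and pass to the central closure $B=Z^{-1}A$, a \emph{unital} central prime superalgebra over the field $K=Z^{-1}Z$ which is not isomorphic to $C(n)$ for $n\le 3$ and carries the Lie ideals $Z^{-1}L,Z^{-1}U$ — now $K$-submodules — with $[Z^{-1}L,Z^{-1}U]\subseteq K$ and neither contained in $K$. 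Replacing $A$ by $B$, we may assume in addition that $A$ is unital and $Z$ is a field.

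The first substantive step is to push the problem onto the odd parts. If $u\in U_0$ then $[u,L_1]\subseteq A_1\cap Z=0$ and $[u,[u,L_0]]=0$ (since $[u,L_0]\subseteq Z$ is central); as $A_0$ is a semiprime algebra by Lemma 1.5, Lemma 1.4 applied inside $A_0$ gives $[u,L_0]=0$, hence $u\in C(L)$ and $u\in Z$ by Lemma 2.1. Thus $U_0\subseteq Z$, and symmetrically $L_0\subseteq Z$; since $L,U\not\subseteq Z$ this forces $L_1\neq 0\neq U_1$. A short computation with the super-Leibniz rule, using that $[l,U]\subseteq Z$ is central, gives $[l^2,U]=0$ for $l\in L_1$, so $l^2\in C(U)\subseteq Z$ by Lemma 2.1 (as $U\not\subseteq Z$); similarly $u^2\in Z$ for $u\in U_1$, while $[l,u]\in Z$ for $l\in L_1$, $u\in U_1$ by hypothesis.

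Next I would introduce $V=L_1+U_1+Z$. Using $L_0,U_0\subseteq Z$ one checks that $V$ is a Lie ideal of $A$ with $V_0=Z$ and $V_1=L_1+U_1\neq 0$, and that every $v=l+u\in V_1$ satisfies $v^2=l^2+u^2+[l,u]\in Z$. If $v^2=0$ for all $v\in V_1$, then Lemma 1.8, applied with $V$ — a dense Lie ideal by Lemma 1.6 — in both roles, forces $V_1=0$, a contradiction. So some $v\in V_1$ has $v^2=\mu\neq 0$ in $K$; then $v$ is invertible in $A$ with $v^{-1}=\mu^{-1}v$, and $A_1=A_0v$.

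The last step, which I expect to be the main obstacle, is to deduce from this invertible odd $v$ that $A\cong C(1)$ or $A\cong C(2)$. Since $V$ is a Lie ideal, $[v,A_1]\subseteq V\cap A_0=K$; writing a general odd element as $bv$ with $b\in A_0$ and expanding $v(bv)+(bv)v\in K$ shows that $b\mapsto\theta(b):=vbv^{-1}$ sends $A_0$ into itself with $\theta^2=\mathrm{id}$ and $\theta(b)+b\in K$ for all $b$. Feeding $\theta(bc)=\theta(b)\theta(c)$ into the relation $\theta(b)+b\in K$ and putting one argument equal to $1$ yields either $A_0=K1$, or a decomposition $A_0=K1\oplus\mathcal H$ with $\mathcal H^2\subseteq K1$ and $vx=-xv$ for $x\in\mathcal H$. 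In the latter case, unless $\mathcal H$ is one-dimensional spanned by an element of square in $K^{\times}$, the spaces $\mathcal H$ and $\mathcal H+\mathcal Hv$ turn out to be nonzero square-zero ideals of $A_0$ and of $A$ respectively, which primeness forbids; so $A_0=K1$ or $A_0=K1\oplus Kx$ with $x^2\in K^{\times}$. Hence $A$ is spanned over $K$ by $\{1,v\}$ with $v^2\in K^{\times}$, giving $A\cong C(1)$, or by $\{1,x,v,xv\}$ with $x^2,v^2\in K^{\times}$ and $xv=-vx$, giving $A\cong C(2)$ — contradicting the hypothesis in either case. The delicate part is exactly this last identification: extracting a small Clifford superalgebra from the single relation $\theta(b)+b\in K$ on $A_0$ together with primeness of $A$; the earlier reductions are routine applications of Lemmas 1.4, 1.5, 1.6, 1.8 and 2.1.
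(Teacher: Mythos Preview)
Your argument is correct and takes a genuinely different route from the paper's. The paper, after the common opening reduction showing $U_0\subseteq Z$ (hence $[L,U]=[L_1,U_1]\subseteq Z$ and one may localize), invokes Montaner's Theorem~3.2 to produce nonzero ideals $I,J$ with $[I,A]\subseteq L$ and $[J,A]\subseteq U$; the inclusion $[[Z^{-1}I,Z^{-1}A],[Z^{-1}J,Z^{-1}A]]\subseteq Z^{-1}Z$ then forces, via invertible elements and Montaner's Lemma~2.6, the identity $[[Z^{-1}A,Z^{-1}A],[Z^{-1}A,Z^{-1}A]]\subseteq Z^{-1}Z$, which characterizes central orders in $C(n)$ with $n\le 3$. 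All the structural work is thus outsourced to \cite{M}.

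Your approach, by contrast, is self-contained after Lemmas~1.4--1.8 and~2.1: you build the Lie ideal $V=Z+L_1+U_1$ with $V_1^2\subseteq Z$, use Lemma~1.8 to extract an invertible odd $v$, and then analyze the involution $\theta(b)=vbv^{-1}$ on $A_0$ directly. The key algebraic step --- that $\mathcal H^2\subseteq K$ together with associativity forces either $\dim_K\mathcal H\le 1$ or $\mathcal H^2=0$ (whence $\mathcal H+\mathcal Hv$ is a nilpotent graded ideal) --- is clean and correct. A pleasant by-product is that your contradiction lands only in $C(1)$ or $C(2)$, so your argument actually proves the lemma under the weaker hypothesis that $A$ is not an order in $C(n)$ for $n\le 2$; the paper's route through Montaner's Lemma~2.6 is what drags $C(3)$ into the statement. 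The trade-off is length and reusability: the paper's detour through ideals $I,J$ is shorter here and sets up the pattern reused in Lemmas~2.4--2.5, whereas your direct structural analysis, while more illuminating for this single lemma, does not feed as naturally into the subsequent arguments.
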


\begin{proof}[Proof:]
Suppose that $ L \nsubseteq Z$. Since $[L, U]\subseteq Z$, it follows that  $[L_0,U_1]=[L_1, U_0]=0$ and $[L_0,U_0]+ [L_1, U_1]\subseteq Z$. So for every $u\in U_0$ we have $[u, [u, L]]=0$, and from Lemmata 1.4 and 1.5 we deduce that $[U_0, L_0]=0$. But $[U_0, L_1]=0$ and so $[U_0, L]=0$. From Lemma 2.1 $U_0\subseteq Z$, and $[L,U]=[L_1, U_1]\subseteq Z$. If $[L, U]=0$, then by Lemma 2.1  $U\subseteq Z$. And if $0\not=[L, U]\subseteq Z$, then $Z\not=0$ and we can consider the localization $Z^{-1}A$ and the Lie ideals $Z^{-1} ZL, Z^{-1} ZU$ in $Z^{-1}A$.

We suppose now that $L\nsubseteq Z$ and $U\nsubseteq Z$. From Theorem 3.2 in \cite {M} there exist  nonzero ideals $I, J$ of $A$ such that 
$$0\not= [I, A]\subseteq L, \quad 0\not= [J, A]\subseteq U.$$
{\noindent Notice that if $[I,A]=0$ or $[J, A]=0$, then, by Lemma 2.3 in \cite  {L-S},  $A\subseteq Z$, a contradiction. Since $[L, U]\subseteq Z$ we have}
$$[[Z^{-1}ZL, Z^{-1}ZU]\subseteq Z^{-1}Z,$$
{\noindent  and so }
$$[[Z^{-1} I, Z^{-1} A], [Z^{-1} J, Z^{-1} A]]\subseteq Z^{-1}Z.$$

  If $[[Z^{-1} I, Z^{-1} A], [Z^{-1} J, Z^{-1} A]]\not= 0$ then $Z^{-1}I, Z^{-1}J$ have invertible elements and $Z^{-1} I, Z^{-1}J = Z^{-1}A$. Therefore 
  $$ [[[Z^{-1} A, Z^{-1} A], [Z^{-1} A, Z^{-1} A]], Z^{-1}A]= 0.$$ 
  {\noindent Now, from Lemma 2.6 in \cite {M} we have a contradiction with our hypothesis about $A$ not being a central order in $C(n)$ with $n = 1, 2, 3$ (notice that in \cite {M} the product $a\circ b$ is our product $[a, b]$ when $a, b\in A_1$).  }
  
  And if $[[Z^{-1} I, Z^{-1} A], [Z^{-1} J, Z^{-1} A]]=0$, then, by Lemma 2.1, 
  $$ \text {either}\  [Z^{-1} I, Z^{-1} A]  \subseteq Z^{-1}Z \ \text { or} \ [ \ Z^{-1} J, Z^{-1} A]\subseteq Z^{-1}Z.$$ {\noindent Therefore, since $[I, A]\not= 0, [J, A]\not=0$, }
   $$ \text {either} \ Z^{-1}I = Z^{-1}A \ \text{ or} \ Z^{-1}J=Z^{-1}A.$$ 
   {\noindent Since $[Z^{-1} I, Z^{-1}A], [Z^{-1}J, Z^{-1}A]\subseteq Z^{-1}Z$,  in both cases we have}
    $$[[Z^{-1} A, Z^{-1} A],  Z^{-1} A]=0.$$
    {\noindent Again from Lemma 2.6 in \cite {M} we have a contradiction with our hypothesis.}
\end{proof}

\begin{lem}
Let $A$ be a prime superalgebra such that it is not an order in $C(n)$ with $n=1, 2,3$, and $L$  a Lie ideal of $A$ such that $[t, L]\subseteq Z$ with $t\in A$. Then either $t\in Z$ or $ L\subseteq Z$. 
\end{lem}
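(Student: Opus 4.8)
The plan is to reduce the statement to Lemma 2.3 by passing from the single element $t$ to a suitable Lie ideal, namely the ``centralizer of $L$ modulo $Z$''. Assume $L\nsubseteq Z$ (otherwise there is nothing to prove); the goal is to show $t\in Z$. Introduce
$$N=\{\,a\in A: [a,L]\subseteq Z\,\}.$$
Then $N$ is a $\Phi$-submodule of $A$ with $Z\subseteq N$ and $t\in N$, and the main point is to check that $N$ is a graded Lie ideal of $A$.

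For gradedness: if $a=a_0+a_1\in N$, then bracketing $a$ with homogeneous $u\in L_0$ and with homogeneous $u\in L_1$, and using $Z\subseteq A_0$ to kill the odd, resp. even, homogeneous component of $[a,u]$, one gets $[a_0,L]\subseteq Z$ and $[a_1,L]\subseteq Z$, so $a_0,a_1\in N$. That $N$ is a Lie ideal follows from the graded Jacobi identity, which for $a\in N$, $b\in A$, $u\in L$ gives $[[a,b],u]=[a,[b,u]]-(-1)^{\bar a\bar b}[b,[a,u]]$: since $L$ is a Lie ideal $[b,u]\in L$, so $[a,[b,u]]\in[a,L]\subseteq Z$, while $[a,u]\in Z$ is central, so $[b,[a,u]]=0$; hence $[[a,b],u]\in Z$, i.e. $[a,b]\in N$, and $[N,A]\subseteq N$.

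With this in hand, $N$ and $L$ are Lie ideals of $A$ with $[N,L]\subseteq Z$ by the very definition of $N$, so Lemma 2.3 --- whose hypothesis that $A$ is not a central order in $C(n)$, $n=1,2,3$, is precisely ours --- yields $N\subseteq Z$ or $L\subseteq Z$. As $L\nsubseteq Z$, we get $N\subseteq Z$, and in particular $t\in Z$.

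I do not anticipate a real obstacle: the one point that requires care is the verification that $N$ is \emph{graded}, which is what allows $N$ to count as a Lie ideal in the superalgebra sense and hence to feed into Lemma 2.3; this is settled by the component-wise computation above. If one wished to avoid $N$ altogether, an alternative is to prove directly, again via the graded Jacobi identity, that $[t,A]\subseteq N$ and then repeat the localization argument of Lemma 2.3 with the Lie ideal generated by $[t,A]$ in place of $U$; but the reduction above is shorter and stays inside results already established.
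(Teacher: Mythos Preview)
Your proposal is correct and takes essentially the same approach as the paper: both introduce the set $U=\{x\in A:[x,L]\subseteq Z\}$, verify via the (super) Jacobi identity that it is a Lie ideal of $A$, and then invoke Lemma~2.3 to conclude. Your explicit verification that this set is graded is a welcome detail that the paper leaves implicit under its standing convention that submodules and ideals are graded.
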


\begin{proof}[Proof:]
Consider $U=\{ x \in A : [x, L]\subseteq Z\}$. We notice that $U$ is a $\Phi$ - submodule of $A$, and it is also  a Lie ideal because for every $u\in L$, $x\in U$ and $y\in A$
$$ [[x, y],u]= (-1)^{\bar u \bar y} [[x,u],y] + (-1)^{\bar y \bar u} [x, [y,u]]\in Z.$$
{\noindent So, $U$ is a Lie ideal of $A$ and from Lemma 2.3  either $U\subseteq Z$ or $L\subseteq Z$.}
\end{proof}

\begin{lem}
Let $A$ be a prime superalgebra such that it is not an order in $C(n)$ with $n=1, 2,3$,  $L$  a Lie ideal and $V$ a $\Phi$ -submodule of $A$ such that $[V,L]\subseteq V $ and $[V,V]\subseteq Z$. Then either $L\subseteq Z$ or $V\subseteq Z$.
\end{lem}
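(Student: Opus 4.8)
The plan is to assume $L\nsubseteq Z$ (otherwise there is nothing to prove) and to deduce $V\subseteq Z$; by Lemma 1.6, $L$ is then dense in $A$. I would first replace $V$ by $V+Z$: since $[V+Z,V+Z]=[V,V]\subseteq Z$, this is a Lie subalgebra of $A^-$, it still satisfies $[V+Z,L]\subseteq V\subseteq V+Z$ and $[V+Z,V+Z]\subseteq Z$, and $V\subseteq Z$ precisely when $V+Z\subseteq Z$; so from now on $V$ is a Lie subalgebra with $Z\subseteq V$. The key reduction is that it suffices to prove $[v,L]\subseteq Z$ for every homogeneous $v\in V$, since Lemma 2.4 (with $t=v$, using $L\nsubseteq Z$) then yields $v\in Z$ and hence $V\subseteq Z$. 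Also, for $v\in V_{1}$ one has at once $2v^{2}=[v,v]\in[V,V]\subseteq Z$, so $v^{2}\in Z$ (as $1/2\in\Phi$); I will come back to this case.

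For $v\in V_{0}$, put $d=[v,\ \cdot\ ]$; from $[v,L]\subseteq V$ and $[V,V]\subseteq Z$ we get $d(L)\subseteq V$ and $d^{2}(L)\subseteq Z$, and since $d$ annihilates $Z$ also $d^{3}(L)=0$. I would attach to $v$ a Lie ideal: the set $M:=[v,L]+L$ satisfies $[M,A]\subseteq M$ — because the Jacobi identity gives $[[v,u],a]=[v,[u,a]]-[u,[v,a]]\in[v,L]+L$ while $[L,A]\subseteq L$ — so $M$ is a Lie ideal, it is not contained in $Z$ (otherwise $L\subseteq Z$) and hence is dense, and $[v,[v,M]]\subseteq Z$. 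Thus the even case reduces to the clean assertion: \emph{if $M$ is a Lie ideal of $A$ with $M\nsubseteq Z$ and $t\in A_{0}$ satisfies $[t,[t,M]]\subseteq Z$, then $t\in Z$.}

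I would prove this assertion by a computation of Herstein--Lanski--Montgomery type, distinguishing whether $Z=0$. If $Z=0$ then $[t,[t,M]]=0$, and, working in the even part (semiprime by Lemma 1.5, where $M$ meets it in an ordinary Lie ideal), Lemma 1.4 gives $[t,M]=0$, i.e.\ $t\in C(M)$, whence $t\in Z=0$ by Lemma 2.1. If $Z\neq0$, I would pass to the central closure $Z^{-1}A$ — still prime, and still not $C(n)$ for $n\le3$ since $A$ is not a central order in $C(n)$, $n\le3$, and $Z\neq0$ — reducing to the case where $Z$ is a field; then, using Theorem 3.2 of \cite{M} to pick a nonzero ideal $I$ with $[I,A]\subseteq M$, I would expand $d^{2}([m,ab])$ for $m\in I$, $a,b\in A$ by the (super-)Leibniz rule, linearize repeatedly, and absorb the cross terms $d(x)d(y)$ using the primeness of $A$ together with Lemma 1.7 ($vLv=0\Rightarrow v=0$); what survives is $d(M)\subseteq Z$, so $t\in Z$ by Lemma 2.4. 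This gives $V_{0}\subseteq Z$. Returning to $v\in V_{1}$: now $[V_{1},L_{1}]\subseteq V_{0}\subseteq Z$ and $[V_{1},V_{1}]\subseteq Z$, and combining $v^{2}\in Z$ with the density of $L$ and the exclusion of $C(n)$, $n\le3$, one gets $v^{2}=0$ for every $v\in V_{1}$; then Lemma 1.8, applied to the Lie subalgebra $V$ ($L$ dense, $[V,L]\subseteq V$), forces $V_{1}=0$. Hence $V\subseteq Z$.

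The step I expect to be the main obstacle is the case $Z\neq0$ of the auxiliary assertion: the passage to the central closure does not remove the centre, so one genuinely has to carry out the second-derivative expansion for a prime ring with $d=[t,\ \cdot\ ]$ and $d^{2}(M)\subseteq Z$ on a Lie ideal $M$, and the delicate point is keeping the error terms $d(x)d(y)$ inside $Z$ rather than merely inside $ZA$ — this is exactly the place where primeness and Lemma 1.7 have to be used with care. A secondary technical point is verifying that $Z^{-1}A$ inherits the property of not being a central order in $C(n)$, $n\le3$, and that $v^{2}=0$ holds for all $v\in V_{1}$.
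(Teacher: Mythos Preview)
Your strategy diverges from the paper's and, as written, has two real gaps.

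\textbf{The odd part.} After establishing $V_{0}\subseteq Z$ you assert that ``combining $v^{2}\in Z$ with the density of $L$ and the exclusion of $C(n)$, $n\le3$, one gets $v^{2}=0$ for every $v\in V_{1}$''. This does not follow. An odd element with nonzero central square exists already in $M_{1|1}$, and the extra information you have --- $[V_{1},L_{1}]\subseteq V_{0}\subseteq Z$ and $[V_{1},L_{0}]\subseteq V_{1}$ --- only yields $[v,[v,L]]=0$, which for odd $v$ is just $[v^{2},L]=0$ on $L_{0}$ and gives no control over $vLv$. So Lemma~1.8 is not available, and the step from $v^{2}\in Z$ to $v^{2}=0$ is missing. (A smaller issue in the same spirit: in your $Z=0$ case for the even assertion, Lemma~1.4 applied in $A_{0}$ gives only $[t,M_{0}]=0$; the jump to $[t,M]=0$ still needs an argument for $M_{1}$, since $M$ is a super-Lie ideal, not an ordinary one.)

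\textbf{The even auxiliary assertion with $Z\neq0$.} You identify this as the main obstacle and outline a Leibniz expansion of $d^{2}([m,ab])$ with cross-terms $d(x)d(y)$ to be ``absorbed'' via primeness and Lemma~1.7. That is a reasonable heuristic, but it is not a proof: the cross-terms lie in $ZA$, not in $Z$, and passing to the central closure does not make $Z$ vanish, so the cancellation you need is genuinely delicate and not supplied here.

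The paper avoids both difficulties by a different dichotomy. It fixes a nonzero ideal $I$ with $[I,A]\subseteq L$ (Montaner) and splits on whether $I\cap Z=0$. If $I\cap Z\neq0$ one localizes to get $Z^{-1}I=Z^{-1}A$, so $Z^{-1}ZV$ becomes a Lie ideal of $[Z^{-1}A,Z^{-1}A]$ and Montaner's Theorem~3.3 plus Lemma~2.4 finish. If $I\cap Z=0$ the key observation is that
\[
[V,V]\cap I\ \subseteq\ Z\cap I\ =\ 0,
\]
which instantly turns the hypothesis $[V,V]\subseteq Z$ into actual vanishing whenever one lands inside $I$. From this one gets $[V_{0},[I,A]_{0}]=0$ via Lemma~1.4 in $A_{0}$, then sets $W=[V,[I,A]]$, notes $[W,W]=0$, applies Lemma~1.8 to kill $W_{1}$, and reads off $[V_{0},[I,A]]=0$ and $[V_{1},[I,A]]=0$ separately; Lemma~2.1 then gives $V\subseteq Z$. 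The point is that the split on $I\cap Z$ --- rather than on $Z$ itself --- manufactures exactly the ``$=0$'' conditions that your approach is missing.
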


\begin{proof}[Proof:]
Suppose that $L\nsubseteq Z$. Then, from Theorem 3.2 in \cite {M}, there exists a nonzero ideal $I$ of $A$ such that $ [I,A]\subseteq L$, and $[I, A]\not= 0$ by Lemma 2.3 in \cite {L-S}. 

 If $I\cap Z\not=0$, we localize $A$ by $Z$ and then $Z^{-1} Z \cap Z^{-1}I \not=0$, so $Z^{-1} I$ has invertible elements and $Z^{-1}I= Z^{-1}A$. Hence $Z^{-1}ZV$ is a Lie ideal of $[Z^{-1}A, Z^{-1}A] $.  From Theorem 3.3 in \cite {M} either $Z^{-1}ZV\subseteq Z^{-1}Z$ or there exists a nonzero ideal $N$ of $A$ such that $0\not=[Z^{-1}N, Z^{-1}A]\subseteq Z^{-1}ZV$. In the second case, since $[V,V]\subseteq Z$, we have 
 $$[[Z^{-1}N, Z^{-1}A], [Z^{-1}N, Z^{-1}A]]\subseteq Z^{-1} Z.$$ 
 {\noindent From Lemma 2.4 we have $[Z^{-1}N, Z^{-1}A] \subseteq Z^{-1}Z$, and since $[Z^{-1}N, Z^{-1}A]\not= 0, Z^{-1}N=Z^{-1}A$. So, }  
 $$ [Z^{-1}A, Z^{-1}A]\subseteq Z^{-1}Z, $$
 {\noindent and by Lemma 2.3, $Z^{-1}A\subseteq Z^{-1}Z$, a contradiction with our assumptions. Therefore $Z^{-1} ZV \subseteq Z^{-1}Z$ and so $V\subseteq Z$.
}

 If $I\cap Z= 0$, then for every  $v\in V_0$ we have  
 $$[v,[v,[I, A]_0]]\subseteq [V,V]\cap I \subseteq Z \cap I =0.$$ 
 {\noindent From Lemmata 1.4 and 1.5 we have}
  $$[V_0, [I, A]_0]=0.$$
  {\noindent  Now, we consider $W= [V, [I,A]]$. Notice that}
 $$ [W,W]\subseteq [V, V]\cap I \subseteq Z\cap I=0.$$
 {\noindent So for every $w\in W_1 $ we have $w^2=0$.  From Lemma 1.8 $W_1=0$. Therefore} 
 $$W_1=[V_0, [I, A]_1]+ [V_1, [I, A]_0]=0 .$$
 {\noindent We have $0\not= [I, A]$, and also $[I, A]\nsubseteq Z$, because if $[I, A]\subseteq Z, $ then $[I, A]\subseteq Z\cap I=0$, a contradiction.  Therefore, since $[V_0, [I, A]]=0$, we have  $V_0\subseteq Z$ because of Lemma 2.1. But we have deduced that $[V_1, [I, A]_0]=0$, and we observe that}
  $$[V_1, [I, A]_1] \subseteq V_0 \cap I \subseteq Z\cap I=0.$$
  {\noindent Therefore, we also obtain  that $[V_1, [I,A]]=0$ and, again by Lemma 2.1, $V_1\subseteq Z$,  that is, $V\subseteq Z$.
 }
 
\end{proof}

\bigskip

We prove now our first theoremt.

\bigskip

\begin{theo}
Let $A$ be a prime superalgebra such that it is not an order in $C(n)$ for $n=1, 2, 3$. Let $W$ be  a subalgebra of $A$, $L$ a Lie ideal of $A$ and $[W,L]\subseteq W$. Then either $L\subseteq Z$, $W\subseteq Z$ or $W$ is dense in $A$.
\end{theo}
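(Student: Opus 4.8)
The plan is to mimic the classical argument of Lanski–Montgomery (Theorem 12 in \cite{La-M}), adapting it to the super setting and reducing all the delicate work to the results already established in Section~2. First I would dispose of the degenerate direction: assume $L\nsubseteq Z$ and $W\nsubseteq Z$, and aim to show $W$ is dense in $A$. By Theorem~3.2 in \cite{M} there is a nonzero ideal $I$ of $A$ with $[I,A]\subseteq L$, and $[I,A]\neq 0$ by Lemma~2.3 in \cite{L-S}; replacing $L$ by the smaller Lie ideal $[I,A]$ we may assume $L=[I,A]$, in particular $[L,L]\subseteq L$. The key structural object to introduce is
$$ T=\{\, w\in W : [w,L]=0 \,\}=W\cap C(L).$$
Since $C(L)$ is a subalgebra and a Lie ideal of $A$ (shown in the proof of Lemma~2.1) and $W$ is a subalgebra with $[W,L]\subseteq W$, the set $T$ is a subalgebra of $W$; moreover $[T,L]=0$ and, because $L$ acts on $W$, one checks $[W,L]\cap C(L)$ feeds back into $T$, so $T$ is a Lie ideal of the Lie ring generated by $W$ and $L$. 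By Lemma~2.1, $C(L)\subseteq Z$ (as $L\nsubseteq Z$ and $A$ is not an order in $C(2)$), hence $T\subseteq Z$.

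Next I would run the commutator descent that forces $W$ to contain a sizeable piece of $L$. For $w\in W$ the map $\mathrm{ad}_w$ sends $L$ into $W$ (since $[W,L]\subseteq W$ and $L\subseteq [A,A]$ forces $[w,L]\subseteq W\cap[A,A]$). Iterating, $[[w,L],L]\subseteq [W,L]\subseteq W$, and more usefully $[w,L]$ is again a subalgebra-valued Lie-ideal-like object. The classical trick is to consider, for a fixed homogeneous $w\in W\setminus Z$, the submodule $V_w=[w,L]\subseteq W$; then $[V_w,L]\subseteq [[w,L],L]\subseteq [W,L]\subseteq W$, but we need it inside $V_w$ up to lower-order terms. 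Here is where I would invoke the super Jacobi identity to get $[[w,L],L]\subseteq [w,[L,L]]+[[w,L],L]$ — i.e. work modulo $[w,L]$ — and combine with the hypothesis $[L,L]\subseteq L$ to conclude that $V=[[W,L],L]+[W,L]$ is a $\Phi$-submodule with $[V,L]\subseteq V$. Then Lemma~2.5 does \emph{not} directly apply (we have no $[V,V]\subseteq Z$), so instead I would apply the not-yet-proved part~(3) philosophy in reverse: if $[w,L]\not\subseteq Z$ for some $w\in W$, then the Lie ideal $L'$ generated by $[w,L]$ satisfies $L'\subseteq \overline{W}$ and, again by Theorem~3.2 in \cite{M}, $\overline W$ contains $0\neq[M,A]$ for a nonzero ideal $M$, which is exactly "$W$ is dense in $A$."

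So the whole argument reduces to the case $[W,L]\subseteq Z$ (equivalently $[w,L]\subseteq Z$ for every homogeneous $w\in W$). But then Lemma~2.4, applied to each homogeneous $w\in W$, gives $w\in Z$ or $L\subseteq Z$; since $L\nsubseteq Z$, every homogeneous $w\in W$ lies in $Z$, hence $W\subseteq Z$, contradicting our assumption. This contradiction shows that in the remaining case we must have had $[W,L]\not\subseteq Z$, and the previous paragraph then yields density of $W$. The main obstacle I anticipate is the bookkeeping in the middle step: controlling the submodule generated by iterated brackets $[[w,L],L]$ so that it is genuinely $L$-invariant, because the super Jacobi identity introduces sign factors $(-1)^{\bar w\bar u}$ and the "lower-order" terms $[w,[L,L]]$ need the reduction $L=[I,A]$ (so that $[L,L]\subseteq L$) to be absorbed — without that reduction the inductive step does not close. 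A secondary technical point is ensuring $T=W\cap C(L)$ is handled correctly when $W$ has nontrivial odd part, but since $C(L)\subseteq Z$ and $Z$ is by definition inside the even part, $T$ is automatically central and causes no trouble.
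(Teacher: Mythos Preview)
Your argument has a genuine gap at the central step. You assert that if $[w,L]\not\subseteq Z$ for some homogeneous $w\in W$, then the Lie ideal $L'$ of $A$ generated by $[w,L]$ lies in $\overline{W}=W$. This is not justified: $L'$ must contain $[[w,L],A]$, and while the Jacobi identity gives $[[w,L],A]\subseteq [[w,A],L]+[w,[L,A]]$ with the second summand inside $[w,L]\subseteq W$, the first summand $[[w,A],L]$ has no reason to land in $W$, because $[w,A]$ is not contained in $W$. The hypothesis $[W,L]\subseteq W$ only lets you bracket $W$ against $L$, not against all of $A$, so you cannot promote a noncentral element of $W\cap L$ to a Lie ideal of $A$ inside $W$ by pure Lie manipulations. (Your appeal to ``the not-yet-proved part~(3) philosophy in reverse'' is also circular, since Theorem~2.7 is proved in the paper \emph{using} Theorem~2.6.) Consequently the dichotomy you set up --- either $[W,L]\subseteq Z$ and Lemma~2.4 finishes, or $[W,L]\not\subseteq Z$ and density follows --- is correct in its first branch but unproved in its second.

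The paper closes exactly this gap by an \emph{associative} computation rather than a Lie one: for $u\in V=[W,L]$ with $[u,u]=0$ and $t,s\in W$, it shows by direct expansion that $[t,u][u,s]A\subseteq W$, and then that $\overline{L}[t,u][u,s]A\subseteq W$, so if some $[t,u][u,s]\neq 0$ the ideal $N[t,u][u,s]A$ (with $N\subseteq\overline{L}$ from Lemma~1.6) sits inside $W$ and density follows. The hard remaining case is when all such products vanish, and the paper spends four steps (using Lemmata~1.7, 1.8 and a localization argument) to force $[V,V]=0$, after which Lemmata~2.5 and 2.4 give $W\subseteq Z$. The substance you are missing is precisely this Herstein-type product argument producing a two-sided ideal inside $W$; the preliminary reductions you describe (replacing $L$ by $[I,A]$, noting $C(L)\subseteq Z$) are correct but do not by themselves reach the conclusion.
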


\begin{proof}[Proof:]
We suppose that $L\nsubseteq Z$. Because of Lemma 1.6 there exists a nonzero ideal $N$ of $A$ such that $N\subseteq \bar L$. 

\medskip

Let $V= [W, L]$. If $V=0$, then from Lemma 2.1 we have either $W\subseteq Z$ or $L\subseteq Z$. Since $L\nsubseteq Z$ we deduce that $W\subseteq Z$.

\medskip

So, suppose now that $V\not= 0$.  Let $0\not= u\in V, w\in W$. We notice that if $u$ is even,  then $[u, u]=0$, and if $u$ is odd,  then $[u, u]=0$ implies that $u^2=0$.  We will prove that  if $t, s \in W$ and $u\in V,$ with $[u, u]=0$,  such that  $[t,u][u,s]\not=0$, then $W$ is dense. First we see that  $[t,u][u,s]A\subseteq W$. We have 
$$[u,s]a= [u,sa]-(-1)^{\bar s \bar u}s[u,a]$$
{\noindent  for every  $a\in A$, therefore}
$$[t,u][u,s]a=[t,u][u,sa]-(-1)^{\bar u \bar s}[t,u]s[u,a].$$
{\noindent  But}

\begin{eqnarray*}
[t,u][u,sa]&=&[t,u[u,sa]]-(-1)^{ \bar t \bar u}u[t,[u,sa]]\\&=&(-1)^{\bar u}[t,[u,usa]]-(-1)^{\bar t \bar u} u[t,[u,sa]]\in W,
\end{eqnarray*}
{\noindent because $W $ is a subring and $L$ is a Lie ideal of $A$. And also }
$$[t,u]s[u,a]=[t,u][s,[u,a]]+(-1)^{\bar s (\bar u+ \bar a)}[t,u][u,a]s \in W , $$
{\noindent because }
\begin{eqnarray*}
[t,u][u,a]&=&[t,u[u,a]]-(-1)^{\bar t \bar u}u[t,[u,a]]\\&=&(-1)^{\bar u} [t,[u,ua]]-(-1)^ {\bar t \bar u}u[t, [u,a]] \in W.
\end{eqnarray*}
{\noindent Therefore $[t,u]s[u,a]\in W$, and so}
 $$[t,u][u,s]a \in W \ \text{ for every } \   t, s \in W, a\in A, u\in V, \ \text {with } \ [u, u]=0 .$$

{\noindent Next we will show that }
$$\overline{L} [t,u][u,s]A\subseteq W.$$
{\noindent Since $[W, L] \subseteq W$ it follows that }
$$L[t,u][u,s]A\subseteq [ L ,[t,u][u,s]A]+[t,u][u,s]A L\subseteq W.$$ 
{\noindent Notice also that }
$$L^2[t,u][u,s]A\subseteq [L,W]+[t,u],[u,s]A \subseteq W.$$
{\noindent  Using induction over $i$ it is easy to prove that }
$$L^i[t,u][u,s]A\subseteq W.$$ 
{\noindent and so that }
$$\bar L[t,u][u,s]A\subseteq W.$$
{\noindent Hence, since $N$ is a nonzero ideal such that $N\subseteq \bar L$, we have $M=N[t,u][u,x]A$, a nonzero ideal such that $M\subseteq W$.}

\medskip

 Therefore, either $W$ is dense in $A$, or, if $W$ is not dense in $A$,  
$$[t,u][u,s]=0 \ \text {for every} \ t,s \in W, u\in V  \ \text {such that} \  [u, u]=0,$$
{\noindent because of the primeness of $A$.}

\medskip

 We suppose now that $W$ is not dense, and so $[t,u][u,s]=0$ for every $u\in V$ such that $[u, u]=0$, and for every $ t,s \in W$. We will show that $V=[W, L]=0$, a contradiction with our assumption. We prove this in 4 steps. Let $K=[V, V]$.
\medskip

1. $K=[V, V]= [V_1, V_1]$. Indeed, let $x, y, u \in V$ such that  $u^2=0$. From our assumption $[u,x][u, y]=0$, and expanding this gives 
$$ux u y- (-1)^{\bar y \bar u}u x yu+(-1)^{\bar x \bar u + \bar y \bar u}xuyu=0.$$
{\noindent  Right multiplication by $u$ gives $uxu yu=0$. Since $[y, l]\in V$ for every $l\in L$, we obtain that $[y, [l,u]]\in V$. So $ux u [y, [l,u]]u=0$. Expanding this expression yields $ux uluyu=0$.  From Lemma 1.7 we deduce that $uVu=0$. If $u, u^\prime \in V$ are homogeneous elements  with $u^2=(u^\prime)^2=0$, we conclude that}
$$(uu^\prime)^2 = uu^\prime uu^\prime \in uVuu^\prime=0.$$
{\noindent If $l\in L$ we have }
$$ 0=u[u^\prime, l]uu^\prime= uu^\prime l u u^\prime . $$
{\noindent So $uu^\prime L uu^\prime=0$,  and,  from Lemma 1.7, }
$$uu^\prime =0 \quad \text {for every}  \quad u, u^\prime \in V, \text {   homogeneous,  with} \quad u^2=(u^\prime )^2 =0. \quad (*)$$
{\noindent  Now consider $x, y \in V_1, u, v\in V_0$. We have $[x,u]^2=0=[y,v]^2$, and so $[x,u][y,v]=0$, because of $(*)$. Since $[V_0, V_1]$ is additively generated by the elements $[x,u]$ with $x\in V_1, u \in V_0$, we have $v^2=0$ for every $v\in[V_0, V_1]$. From Lemma 1.8, }
$$[V_0, V_1]=0,$$
{\noindent  and }
$$[V, V]=[V_0, V_0] + [V_1, V_1].$$
{\noindent Now consider $X=[V_0, V_0]$. We notice that $X$ is a Lie subalgebra of $A$ and $[X, L ]\subseteq X$. From our assumption for every $ x, y, u, v \in V_0$ we have $[x,u]^2=[y,v]^2=0$, and so, by $(*)$ we obtain that $[x,u] [y,v]=0$. Again, since $[V_0, V_0]$ is additively generated by the elements $[x,u]$ with $x, u \in V_0$, we  deduce that for every $v\in X$, $v^2=0$. From Lemma 1.8, }
$$X=[V_0, V_0]=0.$$
{\noindent Therefore $[V, V]= [V_1, V_1]$.}

  \medskip
  
  2. $K=[V_1, V_1]\subseteq Z$. From Lemma 1.5, $A_0$ is semiprime.  Also, we notice that $L_0$ is a Lie ideal of $A_0$, and it is satisfied that 
  $$K=[V,V]=[V_1, V_1]\subseteq L_0, \quad [K, L_0]\subseteq K  \quad  \text {and} \quad  [K, K] \subseteq [V_0, V_0]=0.$$
 {\noindent  From Lemma 4 in \cite {H3},  $[K,L_0]=0$. Moreover, since}
  $$[K, L_1]\subseteq [[V_1, V_1], L_1] \subseteq [V_1, V_0]=0,$$
  {\noindent we deduce that $[K, L]=0$. From Lemma 2.4, $K\subseteq Z$.}
  
  \medskip
  
 3. $K=[V,V]=0$. Indeed, if $K\not= 0$,  then $Z\not= 0$, and we can localize $A$ by $Z$ and consider $Z^{-1}A, Z^{-1}ZW $ and $Z^{-1}ZL$. From Theorem 3.2 in \cite {M}, there exists an ideal of $A$, $I$, such that $0\not=[Z^{-1} I, Z^{-1}A] \subseteq Z^{-1}ZL$. Notice that $[Z^{-1}I, Z^{-1}A]$ is a Lie ideal of $Z^{-1}A$. We claim that $Z^{-1}I = Z^{-1} A$. 
 
 To prove this, we distinguish two cases: when $[Z^{-1}I, Z^{-1}A]\subseteq Z^{-1}Z$, and when $[Z^{-1}I, Z^{-1}A]\nsubseteq Z^{-1}Z$.
  
  If $[Z^{-1}I, Z^{-1}A]\subseteq Z^{-1}Z$, $Z^{-1}I$ has invertible elements and then $Z^{-1}I= Z^{-1}A$.   
  
  If $[Z^{-1}I, Z^{-1}A]\nsubseteq Z^{-1}Z$, then, since $K\subseteq Z$, we have 
  $$[[[Z^{-1}I, Z^{-1}A], Z^{-1}ZW], [[Z^{-1}I, Z^{-1}A], Z^{-1}ZW]]\subseteq Z^{-1}Z.$$
  {\noindent Notice that if $[[[Z^{-1}I, Z^{-1}A], Z^{-1}ZW], [[Z^{-1}I, Z^{-1}A], Z^{-1}ZW]]=0$, using Lemma 2.5 for $L=[Z^{-1}I, Z^{-1}A]$ and $V=[[Z^{-1}I, Z^{-1}A], Z^{-1}ZW]$, we obtain that}
  $$[[Z^{-1}I, Z^{-1}A], Z^{-1}ZW]]\subseteq Z^{-1}Z.$$
  {\noindent If $[[Z^{-1}I, Z^{-1} A], Z^{-1}ZW]\not= 0$, then, since $[[Z^{-1}I, Z^{-1} A], Z^{-1}ZW]\subseteq Z^{-1}I$ we have $Z^{-1}I=Z^{-1}A$. And if $[[Z^{-1}I, Z^{-1}A], Z^{-1}ZW]=0$, then, by Lemma 2.1, $[Z^{-1}I, Z^{-1} A]\subseteq Z^{-1}Z$ or $Z^{-1}ZW\subseteq Z^{-1}Z$. Since $V\not= 0$, $Z^{-1}ZW\nsubseteq Z^{-1}Z$, and so}
   $$0\not=[Z^{-1}I, Z^{-1} A] \subseteq Z^{-1}Z \cap Z^{-1}I,$$
   {\noindent that is, $Z^{-1}A= Z^{-1}I$. So, finally,}
   
   $$0\not= [[[Z^{-1}I, Z^{-1}A], Z^{-1}ZW], [[Z^{-1}I, Z^{-1}A], Z^{-1}ZW]]\subseteq Z^{-1}Z.$$
   {\noindent Then $Z^{-1}I$ has invertible elements and $Z^{-1}I= Z^{-1}A$ }
  
   So  $Z^{-1}I = Z^{-1} A$, and then $[Z^{-1}A, Z^{-1}A]\subseteq Z^{-1}ZL$. Therefore  $Z^{-1}ZW$ is a subalgebra and a Lie ideal of $[Z^{-1}A, Z^{-1}A]$. From Lemma 2.2, either $Z^{-1}ZW \subseteq Z^{-1}Z$ or $Z^{-1}ZW$ is dense in $Z^{-1}A$. If $Z^{-1}ZW\subseteq Z^{-1}Z$, then $W\subseteq Z$, a contradiction because then $V=0$. Therefore $Z^{-1}ZW$ is dense in $Z^{-1}A$, and there exists a nonzero ideal $J$ of $A$ such that $Z^{-1}J\subseteq Z^{-1}ZW$. Hence, since $K\subseteq Z$, 
   $$[[Z^{-1}J, Z^{-1}ZL], [Z^{-1}J, Z^{-1}ZL]]\subseteq Z^{-1}Z.$$
  
  We observe that if  $[[Z^{-1}J, Z^{-1}ZL], $ $[Z^{-1}J, Z^{-1}ZL]]=0$, then by Lemma 2.1
    $$[Z^{-1}J, Z^{-1}ZL]\subseteq Z^{-1}Z.$$
   {\noindent From Lemma 2.3, either $Z^{-1}J \subseteq Z^{-1}Z$ or $Z^{-1}ZL \subseteq Z^{-1}Z$. In the first case, we obtain that $(Z^{-1})(_Z^{-1}(A_1 + A_1^2)=0$, a contradiction with the primeness. In the second, $L\subseteq Z$ and $V=0$, again a contradiction.}
   
    And if $[[Z^{-1}J, Z^{-1}ZL], [Z^{-1}J, Z^{-1}ZL]]\not=0$, since $[[Z^{-1}J, Z^{-1}ZL], [Z^{-1}J, Z^{-1}ZL]]$ $\subseteq Z^{-1}Z$, then $Z^{-1}J$ has invertible elements, and $Z^{-1}J=Z^{-1}A$. That is, 
    $$Z^{-1}ZW=Z^{-1}A.$$ 
    {\noindent But  then }
    $$[Z^{-1}A, Z^{-1}A]= [Z^{-1}I, Z^{-1}A]\subseteq Z^{-1}ZL,$$
    
   {\noindent and} 
   $$K=[V, V]=[[W, L], [W, L]]\subseteq Z,$$ 
   {\noindent implies that}
   $$ [[Z^{-1}A, [Z^{-1}A, Z^{-1}A]], [Z^{-1}A, [Z^{-1}A, Z^{-1}A]]]\subseteq Z^{-1}Z.$$
   {\noindent From Lemma 2.3, }
   $$[Z^{-1}A, [Z^{-1}A, Z^{-1}A]]\subseteq Z^{-1}Z,$$
   {\noindent and again by Lemma 2.3, $Z^{-1}A\subseteq Z^{-1}Z$, a contradiction. So $K=[V, V]=0$.}
      
  \medskip
  
  4. Finally, we reach a contradiction.  $V$ is $\Phi$-submodule of $A$ and $[V,L]\subseteq V$ and $[V,V]=0$ by step 3.  From Lemma 2.5 we have $V=[W,L]\subseteq Z$, because $L\nsubseteq Z$. Then by Lemma 2.4  $W\subseteq Z$, a contradiction because $V\not=0$. 
 
\end{proof}

\bigskip

And, now, to finish, we prove our second theorem.

\bigskip

\begin{theo}
Let $A$ be a prime superalgebra such that it is not an order in $C(n)$ with $n= 1, 2, 3$.  Let $L$ be a Lie ideal of $A$ and $V$ a $\Phi$-submodule of   $A$ such that $[V, L]\subseteq V$. Then either $L\subseteq Z$ or $V\subseteq Z$ or there exists an ideal $M$ of $A$ such that $[M, A]\subseteq V$.
\end{theo}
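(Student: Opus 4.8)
The plan is to assume $L\nsubseteq Z$ and $V\nsubseteq Z$ and to produce an ideal $M$ of $A$ with $0\neq[M,A]\subseteq V$. The first step is a sequence of reductions. The subalgebra $\overline V$ generated by $V$ satisfies $[\overline V,L]\subseteq\overline V$: by induction on the length of a product $v_1\cdots v_k$ of homogeneous elements of $V$, the super-Leibniz rule expands $[v_1\cdots v_k,l]$ as a sum of terms $\pm\,v_1\cdots v_{i-1}[v_i,l]v_{i+1}\cdots v_k$, each in $\overline V$ since $[v_i,l]\in V$. As $V\nsubseteq Z$ forces $\overline V\nsubseteq Z$, the preceding theorem (on subalgebras), applied with $W=\overline V$, together with $L\nsubseteq Z$, shows that $\overline V$ is dense: there is a nonzero ideal $N$ of $A$ with $N\subseteq\overline V$. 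Next, if $[V,V]\subseteq Z$ then Lemma 2.5 gives $L\subseteq Z$ or $V\subseteq Z$, against our hypotheses, so we may assume $[V,V]\nsubseteq Z$. Finally, by Theorem 3.2 in \cite{M} there is a nonzero ideal $I$ with $0\neq[I,A]\subseteq L$; since $[V,[I\cap N,A]]\subseteq[V,L]\subseteq V$ and $[I\cap N,A]\subseteq I\cap N\subseteq\overline V$, we may replace $L$ by the nonzero non-central Lie ideal $[I\cap N,A]$, so that we may also assume $L\subseteq\overline V$.

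It is useful to reformulate the goal: with $D=\{a\in A:[a,A]\subseteq V\}$, we have $[M,A]\subseteq V$ if and only if $M\subseteq D$, and $[M,A]\neq0$ automatically for every nonzero ideal $M$, since a nonzero ideal of a nontrivial prime superalgebra is not central (Lemma 2.3 in \cite{L-S}). So it suffices to show that $D$ contains a nonzero ideal of $A$. One checks, as above, that $D$ is a $\Phi$-submodule with $[D,L]\subseteq D$ (for $d\in D$ and $l\in L$ the super-Jacobi identity rewrites $[[d,l],a]$ as $\pm[[d,a],l]\pm[d,[l,a]]$, and both summands lie in $V$); but $D$ need not be a subalgebra, so the preceding theorem does not apply to it directly.

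The core of the proof would be a construction in the spirit of the proof of the preceding theorem, but aimed at producing an ideal inside $D$ rather than merely inside $\overline V$. Concretely, for a homogeneous $v\in V$ with $[v,v]=0$ (hence $v^2=0$ if $v$ is odd) one looks for a bracket-expression $X$, built from $v$, from $L$ and from the ideal $N\subseteq\overline V$, such that (i) $X$ generates a nonzero two-sided ideal $M$ of $A$, and (ii) every element of $[M,A]$ is a sum of brackets of single elements of $V$ against elements of $L$, hence lies in $V$; the manipulations are iterated Leibniz expansions $[x,yz]=[x,y]z+(-1)^{\bar x\bar y}y[x,z]$ together with $v^2=0$, just as in the proof of the preceding theorem, where the analogous expression $[t,v][v,s]$ was pushed into $\overline V$. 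If no such nonzero $X$ exists, one runs the degeneracy argument of steps 1--4 of that proof -- using the nilpotence Lemmas 1.7 and 1.8, Lemma 4 in \cite{H3} applied in the semiprime algebra $A_0$ (Lemma 1.5), Lemma 2.4, and, at each localization at $Z$, Lemma 2.6 in \cite{M} to rule out $A$ being a central order in $C(n)$, $n=1,2,3$ -- to force $[V_0,V_0]=[V_0,V_1]=0$ and $[V_1,V_1]\subseteq Z$, hence $[V,V]\subseteq Z$, contradicting the reduction above.

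The step I expect to be the real obstacle is this construction: producing an ideal $M$ with $[M,A]\subseteq V$ is genuinely harder than producing one inside $\overline V$, because $V$ is only a $\Phi$-submodule and there is no way to collapse products back into it, so the correct $X$ must be chosen so that the ideal-commutator $[M,A]$ decomposes into brackets hitting single elements of $V$. As in the preceding theorem, the sign bookkeeping of the super-Leibniz rule and the branching case analysis that accompanies each passage to $Z^{-1}A$ (to exclude the Clifford-superalgebra exceptions) add to the difficulty.
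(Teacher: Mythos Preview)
Your sketch contains a genuine error that is also the missing idea. You write that $D=\{a\in A:[a,A]\subseteq V\}$ ``need not be a subalgebra, so the preceding theorem does not apply to it directly'', and you then propose an ad hoc construction you cannot complete. In fact $D$ \emph{is} a subalgebra: for homogeneous $t,s\in D$ and $a\in A$ one has the identity
\[
[ts,a]\;=\;[t,sa]\;+\;(-1)^{\bar t\bar s+\bar t\bar a}\,[s,at],
\]
and both summands lie in $V$ because $t,s\in D$. (The point is to split $[ts,a]$ so that the factors $t,s$ appear in the bracket slot, not outside it; the more familiar Leibniz form $[ts,a]=t[s,a]\pm[t,a]s$ would only land you in $\overline V$.) You already checked $[D,L]\subseteq D$, so Theorem~2.6 applies to $D$ and the whole ``core construction'' you anticipate is unnecessary.

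The paper's argument is exactly this, with one extra twist to close the case $D\subseteq Z$. Set $K=[V,L]$. From the super-Jacobi identity,
\[
[[K,K],A]\subseteq[[K,A],K]\subseteq[L,V]\subseteq V,
\]
so $[K,K]\subseteq D$. The paper lets $T'$ be the subalgebra generated by $[K,K]$ (so $T'\subseteq D$), checks $[T',L]\subseteq T'$, and applies Theorem~2.6 to $T'$. If $T'$ contains a nonzero ideal $M$, then $M\subseteq D$ gives $[M,A]\subseteq V$. If $T'\subseteq Z$, then $[K,K]\subseteq Z$; since $K$ is a $\Phi$-submodule with $[K,L]\subseteq K$, Lemma~2.5 yields $K=[V,L]\subseteq Z$, and then Lemma~2.4 gives $V\subseteq Z$. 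Your preliminary reductions (density of $\overline V$, replacing $L$ by $[I\cap N,A]$) are not needed.
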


\begin{proof}[Proof:]
Let $K=[V, L]$, and $T= \{ x\in A: [x, A]\subseteq V\}$. Then $T$ is a subalgebra of $A$ because for every $t, s \in T$ and $a\in A$
$$ [ts,a] = [t, sa] + (-1)^{\bar t \bar s + \bar a \bar t} [s, at] \in V.$$

Since
$$[[K,K], A]\subseteq [[K,A],K]\subseteq [L, V]\subseteq V,$$
{\noindent it follows that $[K, K]\subseteq T$. If we consider $T^\prime$, the subring generated by  $[K,K]$, we have $[T^{\prime}, L]\subseteq T^\prime$, because}
\begin{eqnarray*}
[[[K,K],L],A]&\subseteq & [[K,K],[L,A]] + [[[K,K],A,],L]\\ & \subseteq & [[K,K],L]+[V,L]\subseteq  V,
\end{eqnarray*}
{\noindent and because for every  $t, s \in [K,K]$ and $u\in L$ we have}
$$ [ts, u]= t[s,u] + (-1)^{\bar s \bar u} [t,u]s \in T^{\prime}.$$
{\noindent Now, $T^\prime$ is a subalgebra of $A$ and $[T^\prime, L]\subseteq T^\prime$. From Theorem 2.6 either $L\subseteq Z$, or $T^\prime \subseteq Z$ or $T^\prime$ contains a nonzero ideal $M$ of $A$. If $L\subseteq Z$ we have finished. If $L\nsubseteq Z$ and $T^\prime \subseteq Z$, then $[K,K]\subseteq Z$ and then  $K\subseteq Z$ by Lemma 2.5. So $[V, L]\subseteq Z$, but then by Lemma 2.4 $V\subseteq Z$. If $M$ is an ideal of $A$ such that $M\subseteq T^\prime$, then $M\subseteq T$ and $[M, A]\subseteq V$.}

\end{proof}

\bigskip

\bigskip

\author{Jes\'us Laliena  \footnote {The author has been
supported by the Spanish Ministerio de Ciencia e Innovaci\'on (MTM 2010-18370-CO4-03).} 
\\{\small Departamento de Matem\'aticas y Computaci\'on}\\
{\small  Universidad de La Rioja}\\
{\small  26004, Logro\~no. Spain}\\
{\small jesus.laliena@dmc.unirioja.es }}

\end{document}